\documentclass[10pt,reqno]{amsart}

\def\squad{\hskip0.75em\relax}

\usepackage{setspace}
\setdisplayskipstretch{1.5}
\setstretch{1.05}

\usepackage[T1]{fontenc}

\usepackage{mathtools}
\usepackage{amsmath,amsfonts,amsbsy,amsgen,amscd,mathrsfs,amssymb,amsthm}
\usepackage{enumerate}
\usepackage{bm}
\usepackage{calc}


\usepackage[usenames,dvipsnames]{xcolor}
\usepackage[colorlinks=true,citecolor=blue,linkcolor=blue]{hyperref}

\usepackage{tikz}
\usepackage[font=small]{caption}


\newtheorem{thm}{Theorem}[section]
\newtheorem{lem}[thm]{Lemma}

\newtheorem{prop}[thm]{Proposition}
\newtheorem{cor}[thm]{Corollary}

\theoremstyle{definition}

\newtheorem*{rem*}{Remark}

\numberwithin{equation}{section} 
\numberwithin{figure}{section}
\numberwithin{table}{section}

\makeatletter

\newcommand{\id}{\mathbf{1}}
\newcommand{\spc}{\mathrm{sp}}

\begin{document}

\title
[Computing extreme singular values of free operators]
{\vspace*{-.6cm}
Computing extreme singular values \\ of free operators}

\author[Parmaks\i z]{Emre Parmaks\i z}
\address{Department of Mathematics, Massachusetts Institute of 
Technology, Cambridge, MA 02139, USA}
\email{yep@mit.edu}

\author[Van Handel]{Ramon van Handel}
\address{Department of Mathematics, Princeton University, Princeton, NJ 
08544, USA}
\email{rvan@math.princeton.edu}

\begin{abstract}
A recent development in random matrix theory, the intrinsic freeness 
principle, establishes that the spectrum of very general random matrices 
behaves as that of an associated free operator. This reduces the study of 
such random matrices to the deterministic problem of computing spectral 
statistics of the free operator. In the self-adjoint case, the spectral 
edges of the free operator can be computed exactly by means of a 
variational formula due to Lehner. In this note, we provide variational 
formulas for the largest and smallest singular values in the 
non-self-adjoint case.
\end{abstract}

\subjclass[2020]{15B52; 
                 46L53; 
                 46L54} 

\maketitle

\raggedbottom

\section{Introduction}
\label{sec:intro}

Let $X$ be a $d\times d$ random matrix with jointly Gaussian entries.
Without loss of generality, such a matrix can always be represented as
$$
	X = a_0 + \sum_{i=1}^n a_i g_i,
$$
where $g_1,\ldots,g_n$ are i.i.d.\ standard Gaussian variables
and $a_0,\ldots,a_n\in\mathbb{C}^{d\times d}$ are deterministic 
matrices.
Since no structural assumptions are made, the entries of such 
matrices can be highly nonhomogeneous and dependent.

Nonetheless, recent
advances in random marix theory make it possible to understand the 
spectrum of such matrices under surprisingly minimal assumptions.
To this end, define the deterministic operator
\begin{equation}
\label{eq:x}
	x = a_0\otimes\id + 
	\sum_{i=1}^n a_i\otimes s_i,
\end{equation}
where $s_1,\ldots,s_n$ is a free semicircular family 
(cf.\ \cite{NS06} or \cite[\S 4.1]{BBV23}). The \emph{intrinsic 
freeness principle} \cite{BBV23,BCSV24} establishes that under 
mild conditions, the spectral statistics of $X$ and $x$ nearly 
coincide: in the self-adjoint case, the spectra of $X$ and 
$x$ are close in the Hausdorff distance, while in the general case the 
same is true for the singular value spectrum. The universality principles
of \cite{BV24} further extend these conclusions to a large family of 
non-Gaussian random matrices.

The power of these results lies in the fact that they reduce the study of 
complicated random matrices to the deterministic problem of computing the 
spectrum of a free operator, which is accessible using free probability 
theory. In particular, when $x$ is self-adjoint, the following variational 
principle due to Lehner \cite{Leh99} (see \S\ref{sec:lehner}) 
provides an explicit 
formula for the spectral edges of $x$. In the following, we write 
$\lambda_{\rm max}(x)=\sup\spc(x)$ and $\lambda_{\rm 
min}(x)=\inf\spc(x)= -\lambda_{\rm max}(-x)$.

\begin{thm}[Lehner]
\label{thm:lehner}
Let $x$ be as in \eqref{eq:x} with 
$a_0,\ldots,a_n\in \mathbb{C}^{d\times d}_{\rm s.a.}$. Then
$$
	\lambda_{\rm max}(x) =
	\inf_{z>0} \, \lambda_{\rm max}\Bigg(
	a_0 + z^{-1} + \sum_{i=1}^n a_i z a_i
	\Bigg),
$$
where $z\in \mathbb{C}^{d\times d}_{\rm s.a.}$. Moreover, the infimum can 
be restricted to those $z$ such that the matrix in $\lambda_{\rm 
max}({\cdots})$ is a multiple of the identity.
\end{thm}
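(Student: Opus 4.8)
The plan is to follow Lehner's approach \cite{Leh99} and prove the two inequalities separately, working with the concrete realization of the free semicircular family as $s_i = \ell_i + \ell_i^*$ on the full Fock space $\mathcal F$ over $\mathbb C^n$, where $\ell_1,\dots,\ell_n$ are the creation operators; since only $\spc(x)$ is at stake this choice is harmless, and it has the convenient feature that $\ell_i^*\ell_j = \delta_{ij}\,\id$. Write $L := \sum_{i=1}^n a_i\otimes\ell_i$, so that $x = a_0\otimes\id + L + L^*$, and set $\eta(w) := \sum_{i=1}^n a_i w a_i$ for $w\in\mathbb C^{d\times d}$; using that the $a_i$ are self-adjoint one checks that $L^*(c\otimes\id)L = \eta(c)\otimes\id$ for every $c\in\mathbb C^{d\times d}$. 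Below, a scalar (or matrix) standing where an operator is expected denotes that scalar (or matrix) tensored with $\id$.

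\emph{The inequality $\le$.} I would obtain $\lambda_{\rm max}(x)\le\lambda_{\rm max}(a_0 + z^{-1} + \eta(z))$ for every $z>0$ by completing the square: the operator $(z^{-1}\otimes\id - L)^*(z\otimes\id)(z^{-1}\otimes\id - L)$ is manifestly positive, and expanding it using $(z^{-1}\otimes\id)(z\otimes\id)=\id$ and $L^*(z\otimes\id)L=\eta(z)\otimes\id$ shows that it equals $(z^{-1}+\eta(z))\otimes\id - (L+L^*)$. Hence $L+L^*\le(z^{-1}+\eta(z))\otimes\id$, so that $x = a_0\otimes\id + L+L^* \le (a_0+z^{-1}+\eta(z))\otimes\id$, and passing to the top of the spectrum gives the claimed bound. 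Taking the infimum over $z>0$ shows that $\lambda_{\rm max}(x)$ is at most the right-hand side of the theorem; crucially, this bound holds for \emph{every} $z>0$, in particular for those making $a_0+z^{-1}+\eta(z)$ a multiple of the identity.

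\emph{The inequality $\ge$, and the ``moreover''.} For each real $\lambda>\lambda_{\rm max}(x)$ I would exhibit a single $z>0$ with $a_0+z^{-1}+\eta(z)=\lambda I$. Since $\lambda\notin\spc(x)$, the resolvent $r := (\lambda - x)^{-1}$ exists and obeys $0 < (\lambda+\|x\|)^{-1} \le r$; hence, letting $\tau$ be the trace on the algebra generated by $s_1,\dots,s_n$ and $E=\mathrm{id}\otimes\tau$ the resulting conditional expectation onto $\mathbb C^{d\times d}$, the matrix $z := E[r]$ is positive definite, hence invertible. Applying $E$ to the operator identity $(\lambda - x)r = \id$ and invoking the operator-valued Dyson equation $E[(x-a_0\otimes\id)r] = \eta(E[r])\,E[r]$ --- the defining feature of the $\mathbb C^{d\times d}$-valued semicircular element $\sum_i a_i\otimes s_i$ --- yields $\lambda z - a_0 z - \eta(z)z = I$, i.e.\ $a_0 + z^{-1} + \eta(z) = \lambda I$. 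For this $z$ the matrix in question is the multiple $\lambda I$ of the identity, so its largest eigenvalue is exactly $\lambda$; letting $\lambda\downarrow\lambda_{\rm max}(x)$ shows that the infimum --- even when restricted to those $z$ for which $a_0+z^{-1}+\eta(z)$ is scalar --- is at most $\lambda_{\rm max}(x)$. Combined with the first step this forces equality throughout, proving both the formula and the ``moreover'' assertion.

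The only substantive input is the one used in the last step: that $w := \sum_i a_i\otimes s_i$ is a $\mathbb C^{d\times d}$-valued semicircular element with covariance $\eta$, so that its matrix Cauchy transform $G(b) := E[(b-w)^{-1}]$ satisfies $G(b)^{-1} = b - \eta(G(b))$, equivalently the identity $E[wr]=\eta(E[r])E[r]$ invoked above. This is standard in operator-valued free probability (cf.\ \cite{NS06}), so I would cite it rather than reprove it; everything else is routine --- the positivity and invertibility of $z=E[r]$ from the two-sided bounds on $r$, and the observation that the easy inequality of the second step, being valid for all $z>0$, prevents the restriction to scalar matrices from lowering the infimum.
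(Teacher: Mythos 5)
Your argument is correct, but it takes a genuinely different route from the paper. The paper proves Theorem~\ref{thm:lehner} in two lines: it shifts $x$ by $c\,\id$ so that both $x+c\id\ge 0$ and $a_0+c\id\ge 0$, invokes Lehner's \cite[Corollary~1.5]{Leh99} verbatim for the norm of the shifted operator, and translates norms back into $\lambda_{\rm max}$; the ``moreover'' clause is simply inherited from Lehner's statement. You instead reprove the result: the inequality $\le$ via the sum-of-squares identity $(z^{-1}\otimes\id-L)^*(z\otimes\id)(z^{-1}\otimes\id-L)=\big(z^{-1}+\eta(z)\big)\otimes\id-(L+L^*)\ge 0$ on the Fock space (using $\ell_i^*\ell_j=\delta_{ij}\id$, which is \eqref{eq:fockid} in the paper), and the inequality $\ge$ together with the ``moreover'' clause by taking $z=({\mathrm{id}\otimes\tau})[(\lambda-x)^{-1}]>0$ for $\lambda>\lambda_{\rm max}(x)$ and feeding it into the operator-valued semicircular Dyson equation, which makes $a_0+z^{-1}+\eta(z)=\lambda\id$ exactly. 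This is a legitimate and self-contained proof modulo the cited Dyson equation (not circular with respect to Lehner's variational formula), and it is in fact structurally parallel to how the paper itself establishes the ``multiple of the identity'' refinement of its own Theorem~\ref{thm:main}: Theorem~\ref{thm:cauchy} plays there exactly the role your Dyson-equation step plays here, except that the paper derives the fixed-point equation for $xx^*+b\otimes\id$ from scratch via moment recursions rather than citing it. What your route buys is independence from \cite[Corollary~1.5]{Leh99}; what the paper's route buys is brevity and no need to discuss the Fock-space realization or subordination identities at all.

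One detail you should make explicit: the identity $({\mathrm{id}\otimes\tau})[wr]=\eta(({\mathrm{id}\otimes\tau})[r])\,({\mathrm{id}\otimes\tau})[r]$ is usually stated for matrix arguments with nonzero imaginary part (or for $|\lambda|$ large via the convergent moment expansion), so you need a one-line analytic-continuation remark to justify it at real $\lambda>\lambda_{\rm max}(x)$ --- both sides are analytic on the (connected) resolvent set, so the extension is immediate, and invertibility of $z$ there is exactly what your two-sided bound on $r$ provides. With that remark added, the argument is complete.
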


Lehner's formula is extremely useful in applications that require a 
precise understanding of the spectral edges of nonhomogeneous and 
dependent random matrices. For example, several such applications can be 
found in \cite{BCSV24}.

When $x$ is not self-adjoint, one is typically interested in the largest 
and smallest singular values $\mathrm{s}_{\rm 
max}(x)=\sup\spc((xx^*)^{1/2})$ and $\mathrm{s}_{\rm min}(x) = 
\inf\spc((xx^*)^{1/2})$, respectively. More generally, in problems related 
to sample covariance matrices, one is interested in the spectral edges of 
the operator $xx^* + b\otimes\id$ with $b\in\mathbb{C}^{d\times d}_{\rm 
s.a.}$. We emphasize at the outset that the computation of these 
quantities is in principle fully settled in \cite[\S 5.2]{Leh99}: Lehner 
provides a recipe for computing the operator norm of \emph{any} 
noncommutative quadratic polynomial of a free semicircular family with 
matrix coefficients, of which the above quantities are special cases. 
However, this general recipe gives rise to complicated and in some cases 
inexplicit variational formulas that prove to be difficult to use in 
concrete situations.

The aim of this note is to obtain much simpler explicit formulas, in the 
spirit of Theorem~\ref{thm:lehner}, for the spectral edges of the operator 
$xx^* + b\otimes\id$ in the case that $x$ is centered (that is, $a_0=0$ in 
\eqref{eq:x}). This setting may be viewed as a matrix-valued analogue of 
the free Poisson distribution, as will be 
explained in \S\ref{sec:cauchy} below. Even though this is a special 
case of the more general problem considered by Lehner, it is one that 
arises frequently in applications, and tractable formulas for the spectral 
edges are essential for the analysis of concrete models.

\subsection{Main result}

The following setting will be considered throughout the 
remainder of this note. Fix $d,m,n\in\mathbb{N}$, deterministic
matrices $a_1,\ldots,a_n\in\mathbb{C}^{d\times m}$, and a self-adjoint 
deterministic matrix $b\in\mathbb{C}^{d\times d}_{\rm s.a.}$.
Define the operator
\begin{equation}
\label{eq:mainx}
	x = \sum_{i=1}^n a_i\otimes s_i,
\end{equation}
where $s_1,\ldots,s_n$ is a free semicircular family.

\begin{thm}
\label{thm:main}
For $x$ as in \eqref{eq:mainx}, we have
$$
	\lambda_{\rm max}(xx^* + b\otimes\id) =
	\inf_{\substack{z>0 \\ \sum_j \! a_j^*za_j<\id}}
	\lambda_{\rm max}\Bigg(
	b + z^{-1} +
	\sum_{i=1}^n a_i \Bigg(
	\id - \sum_{j=1}^n a_j^*z a_j
	\Bigg)^{-1} a_i^*
	\Bigg)
$$
and
$$
	\lambda_{\rm min}(xx^* + b\otimes\id) =
	\sup_{z<0}\,
	\lambda_{\rm min}\Bigg(
	b + z^{-1} +
	\sum_{i=1}^n a_i \Bigg(
	\id - \sum_{j=1}^n a_j^*z a_j
	\Bigg)^{-1} a_i^*
	\Bigg),
$$
where 
$z\in \mathbb{C}^{d\times d}_{\rm s.a.}$.
Moreover, the infimum \emph{(}supremum\emph{)} can be restricted to
those $z$ such that the matrix in
$\lambda_{\rm max}({\cdots})$ \emph{(}$\lambda_{\rm 
min}({\cdots})$\emph{)} is a multiple of the identity.
\end{thm}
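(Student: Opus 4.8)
The key idea is to reduce the singular-value problem to a self-adjoint problem where Lehner's theorem (Theorem~\ref{thm:lehner}) applies, via a dilation/linearization trick. The plan is as follows.

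\textbf{Step 1: Self-adjoint dilation.} Observe that $\lambda_{\rm max}(xx^* + b\otimes\id)$ and $\lambda_{\rm min}(xx^* + b\otimes\id)$ can be extracted from the spectrum of a self-adjoint operator. Specifically, consider the operator
$$
	X = \begin{pmatrix} b\otimes\id & x \\ x^* & 0 \end{pmatrix}
	= \begin{pmatrix} b & 0 \\ 0 & 0 \end{pmatrix}\otimes\id
	+ \sum_{i=1}^n \begin{pmatrix} 0 & a_i \\ a_i^* & 0 \end{pmatrix}\otimes s_i
$$
acting on $(\mathbb{C}^d\oplus\mathbb{C}^m)\otimes(\text{free prob.\ space})$. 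This is of the form \eqref{eq:x} with self-adjoint coefficients $\tilde a_0 = b\oplus 0$ and $\tilde a_i = \begin{pmatrix} 0 & a_i \\ a_i^* & 0\end{pmatrix}$. One checks that $X^2 = \begin{pmatrix} xx^* + b^{\otimes}bx\cdots \end{pmatrix}$... more carefully, when $b=0$ one has $X^2 = (xx^*)\oplus(x^*x)$, so $\lambda_{\rm max}(X) = \mathrm{s}_{\rm max}(x)$ and $\mathrm{s}_{\rm min}(x)^2 = \inf\spc(X^2)$; with general $b$ a short computation relates $\spc(X)$ to $\spc(xx^* + b\otimes\id)$ by noting $\spc(xx^*+b\otimes\id)$ and $\spc(x^*x)$ together, after an affine reparametrization, determine $\spc(X)$ via a quadratic. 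The cleanest route: apply Lehner directly to the shifted operator $X + \beta$ for a scalar shift making everything work, or better, apply Lehner to the operator $\begin{pmatrix} b & 0\\0&t\end{pmatrix}\otimes\id + \sum \tilde a_i\otimes s_i$ and track the dependence, since $\lambda_{\rm max}(xx^* + b\otimes\id) = \inf\{t : t\geq b\text{-shifted...}\}$; I would use the Schur complement identity $\lambda_{\rm max}(xx^*+b\otimes\id) = \inf\{t\in\mathbb{R} : t > \lambda_{\rm max}(b\otimes\id) \text{ and } (t - b\otimes\id) \succ xx^*\}$, and $(t-b\otimes\id)\succ xx^*$ iff the self-adjoint operator $\begin{pmatrix} t - b\otimes\id & x \\ x^* & \id\end{pmatrix}$ is positive, i.e.\ iff $\lambda_{\rm min}$ of the dilation (with a $t$-dependent $\tilde a_0$) is positive.

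\textbf{Step 2: Apply Lehner to the dilation.} Apply Theorem~\ref{thm:lehner} (for $\lambda_{\rm max}$, and its $\lambda_{\rm min}$ analogue obtained by negation) to the self-adjoint operator produced in Step~1. The variational parameter is now a self-adjoint matrix $Z\in\mathbb{C}^{(d+m)\times(d+m)}_{\rm s.a.}$, written in block form $Z = \begin{pmatrix} z_{11} & z_{12} \\ z_{12}^* & z_{22}\end{pmatrix}$. Lehner's formula produces
$$
	\lambda_{\rm max}(X) = \inf_{Z>0}\lambda_{\rm max}\Bigg(\tilde a_0 + Z^{-1} + \sum_i \tilde a_i Z\tilde a_i\Bigg).
$$
Compute $\sum_i \tilde a_i Z \tilde a_i$ in block form: since $\tilde a_i = \begin{pmatrix} 0 & a_i\\ a_i^* & 0\end{pmatrix}$, one gets $\tilde a_i Z\tilde a_i = \begin{pmatrix} a_i z_{22} a_i^* & a_i z_{12}^* a_i \\ a_i^* z_{12} a_i^* & a_i^* z_{11} a_i\end{pmatrix}$. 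Likewise expand $Z^{-1}$ in block form via Schur complements.

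\textbf{Step 3: Optimize over the off-diagonal and one diagonal block.} The crux is that many of the $Z$-parameters can be eliminated by partial optimization, leaving only a single self-adjoint matrix parameter of size $d\times d$ (called $z$ in the statement), and reproducing exactly the expression $b + z^{-1} + \sum_i a_i(\id - \sum_j a_j^* z a_j)^{-1}a_i^*$. I expect that the optimal $Z$ is block-diagonal ($z_{12}=0$) — this should follow from a convexity/symmetry argument (the functional is invariant under $z_{12}\mapsto -z_{12}$ in a suitable sense, or one can use operator-convexity of the map $Z\mapsto \lambda_{\rm max}(\tilde a_0 + Z^{-1} + \sum\tilde a_i Z\tilde a_i)$ established in Lehner's work, cf.\ \S\ref{sec:lehner}). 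With $z_{12}=0$, set $z := z_{11}$ and optimize explicitly over $z_{22}>0$: the relevant term is $\lambda_{\rm max}$ of a block matrix whose $(2,2)$ entry involves $z_{22}^{-1} + \sum_i a_i^* z a_i$ (wait — we need to be careful about which block sees $t$); the Schur complement reduction of this $2\times 2$ operator-matrix in $\lambda_{\rm max}(\cdots)$ form, minimized over $z_{22}$, collapses to $b + z^{-1} + \sum_i a_i(\id - \sum_j a_j^* z a_j)^{-1}a_i^*$, and the constraint $\sum_j a_j^* z a_j < \id$ emerges as the feasibility condition for that Schur complement (positivity of the $(2,2)$ block). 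The sign conditions $z>0$ resp.\ $z<0$ in the two formulas track whether we are doing $\lambda_{\rm max}$ or $\lambda_{\rm min}$, i.e.\ whether we dilate $X$ or $-X$ / use the Schur complement "from above" or "from below."

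\textbf{Main obstacle.} The principal difficulty is Step~3 — justifying that the optimal $Z$ may be taken block-diagonal, and carrying out the explicit inner minimization over $z_{22}$ cleanly. The block-diagonal reduction needs a genuine argument (either a direct convexity computation using joint operator-convexity of $(Z,W)\mapsto \text{something}$, or an explicit completion-of-squares identity showing $z_{12}\neq 0$ is never helpful); getting the inequality in the right direction, and matching the feasibility constraint $\sum_j a_j^* z a_j < \id$ exactly, is where the bookkeeping must be done with care. A secondary technical point is the "multiple of the identity" refinement, which should transfer directly from the corresponding refinement in Theorem~\ref{thm:lehner} applied to the dilation, combined with the observation that if the Lehner-optimal matrix for $X$ is scalar then so is the reduced $d\times d$ matrix; I would verify this compatibility at the end.
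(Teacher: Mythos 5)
Your outline for the first (upper-edge) formula is broadly in the spirit of the paper's proof---dilate to a self-adjoint operator, apply Theorem~\ref{thm:lehner}, and eliminate the auxiliary blocks of the Lehner variable via Schur complements---but the step you yourself flag as the main obstacle is the entire technical content, and your proposed resolution is not the one that works. Two concrete issues: first, $\lambda_{\rm max}$ of your dilation $\bigl[\begin{smallmatrix} b\otimes\id & x\\ x^* & 0\end{smallmatrix}\bigr]$ is not a function of $\lambda_{\rm max}(xx^*+b\otimes\id)$ for general $b$; the paper instead shifts so that $b=a_0a_0^*>0$ and dilates the row operator $\bigl[\,a_0\otimes\id \ \ x\,\bigr]$, which gives the clean identity of Lemma~\ref{lem:lin} (your alternative $t$-parametrized positivity criterion could be made to work for the upper edge, but only adds bookkeeping). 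Second, the elimination in your Step~3 is not achieved by showing an optimal $Z$ is block diagonal---in the paper's argument the feasible competitor $g(\lambda;z)$ of Lemma~\ref{lem:valred} has nonzero off-diagonal blocks coupling $a_0$ and $z$. What the paper actually does is restrict Lehner's infimum to $\tilde z$ with $f(\tilde z)=\lambda\id$, extract from this constraint, via two nested Schur complements, the conditions $z>0$, $\sum_j a_j^*za_j<\lambda\id$ and a key operator inequality (Lemma~\ref{lem:schurred}), exhibit the explicit competitor $g(\lambda;z)$ with smaller value, and finally optimize the scalar $\lambda$ using $\inf_{\lambda>0}\max\{\lambda,\lambda^{-1}c\}=c^{1/2}$. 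Your hoped-for symmetry/convexity argument for $z_{12}=0$ is neither supplied nor evidently true.

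The genuinely fatal gaps are the second formula and the scalar-restriction claim. For $\lambda_{\rm min}$, dilating $-X$ or taking a Schur complement ``from below'' cannot work: the dilation spectrum is symmetric and contains $0$, and positivity of a pencil $\bigl[\begin{smallmatrix} A & x\\ x^* & \id\end{smallmatrix}\bigr]$ encodes only upper bounds $xx^*<A$, never lower bounds, so no positivity criterion that is linear in $x$ captures $\lambda_{\rm min}(xx^*+b\otimes\id)$ this way. The paper's lower-edge proof is structurally different: after shifting so that $-b=a_0a_0^*>0$, it represents $a_0a_0^*\otimes\id-xx^*=L(T\otimes\id)L^*$ using the Fock-space identities \eqref{eq:fockid} with a positive definite $T$, applies the extended Lehner formula for matrix coefficients of $l_i,l_i^*,p_\Omega$ (Theorem~\ref{thm:lehnerfock}), and then must undo the implicit factorization $T=RR^*$ with the matrix inversion lemma (Lemma~\ref{lem:lowerwoodbury}) to reach the explicit formula in the statement; nothing in your proposal addresses this. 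Finally, the ``multiple of the identity'' refinement does not transfer from the dilation as you suggest: in the reduction, the scalar constraint $f(\tilde z)=\lambda\id$ degrades to an inequality on the reduced $d\times d$ objective, so scalarity is lost. The paper proves the refinement by a separate argument, showing that the matrix Cauchy transform $G(\lambda)$ satisfies the fixed-point equation of Theorem~\ref{thm:cauchy} and is feasible near the spectral edges ($G(\lambda)>0$ with $\sum_j a_j^*G(\lambda)a_j<\id$, resp.\ $G(\lambda)<0$), which supplies near-optimal scalar points; some independent argument of this kind would be needed in your approach as well.
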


As an immediate corollary, we obtain variational formulas for the
largest and smallest singular values $\mathrm{s}_{\rm max}(x)$ and
$\mathrm{s}_{\rm min}(x)$ of $x$.

\begin{cor}
\label{cor:sing}
For $x$ as in \eqref{eq:mainx}, we have
$$
	\mathrm{s}_{\rm max}(x)^2 = 
	\inf_{\substack{z>0 \\ \sum_j \! a_j^*za_j<\id}}
	\lambda_{\rm max}\Bigg(
	z^{-1} +
	\sum_{i=1}^n a_i \Bigg(
	\id - \sum_{j=1}^n a_j^*z a_j
	\Bigg)^{-1} a_i^*
	\Bigg)
$$
and
$$
	\mathrm{s}_{\rm min}(x)^2 = 
	\sup_{z<0}\,
	\lambda_{\rm min}\Bigg(
	z^{-1} +
	\sum_{i=1}^n a_i \Bigg(
	\id - \sum_{j=1}^n a_j^*z a_j
	\Bigg)^{-1} a_i^*
	\Bigg).
$$
\end{cor}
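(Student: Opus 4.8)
The plan is to deduce Corollary~\ref{cor:sing} directly from Theorem~\ref{thm:main} by specializing to $b=0$, after rewriting the extreme singular values as spectral edges of the positive operator $xx^*$. First I would record the elementary observation that, since $xx^*\geq 0$ and $t\mapsto t^{1/2}$ is an increasing homeomorphism of $[0,\infty)$ onto itself, the spectral mapping theorem gives $\spc((xx^*)^{1/2})=\{t^{1/2}:t\in\spc(xx^*)\}$. Since taking the supremum or infimum of a set of nonnegative numbers commutes with squaring, this yields
$$
	\mathrm{s}_{\rm max}(x)^2=\sup\spc(xx^*)=\lambda_{\rm max}(xx^*),\qquad
	\mathrm{s}_{\rm min}(x)^2=\inf\spc(xx^*)=\lambda_{\rm min}(xx^*).
$$

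Next I would invoke Theorem~\ref{thm:main} with the choice $b=0$. Because $0\in\mathbb{C}^{d\times d}_{\rm s.a.}$, the hypotheses of that theorem hold verbatim, and the term $b+z^{-1}$ appearing in both variational formulas collapses to $z^{-1}$. Substituting the two identities above, this produces exactly the two displayed formulas of the corollary, with the constraint sets ($z>0$ together with $\sum_j a_j^*za_j<\id$ for the maximum, and $z<0$ for the minimum) as well as the reduction to those $z$ for which the matrix inside $\lambda_{\rm max}$ (resp.\ $\lambda_{\rm min}$) is a multiple of the identity inherited unchanged.

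I do not expect a genuine obstacle here: all of the analytic content lives in the proof of Theorem~\ref{thm:main}, and the corollary is a one-line specialization. The only point deserving a sentence of care is the first step — the passage from the singular value spectrum of $x$ to the spectrum of $xx^*$, and in particular that $\sup$ and $\inf$ over the nonnegative spectrum commute with squaring — which is precisely why the statement is phrased in terms of $\mathrm{s}_{\rm max}(x)^2$ and $\mathrm{s}_{\rm min}(x)^2$ rather than the singular values themselves.
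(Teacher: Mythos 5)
Your proposal is correct and is essentially the paper's own argument: the paper proves the corollary by applying Theorem~\ref{thm:main} with $b=0$, exactly as you do. The extra remark on the spectral mapping theorem, identifying $\mathrm{s}_{\rm max}(x)^2=\lambda_{\rm max}(xx^*)$ and $\mathrm{s}_{\rm min}(x)^2=\lambda_{\rm min}(xx^*)$, is a harmless elaboration of a step the paper leaves implicit.
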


\begin{proof}
Apply Theorem \ref{thm:main} with $b=0$.
\end{proof}

\subsection{A reduction principle}

We now state a reduction principle that facilitates the application of 
Theorem~\ref{thm:main} to models with symmetries. An analogous 
result in the setting of Theorem~\ref{thm:lehner} appears in
\cite[Lemma~7.1]{BCSV24}.

\begin{lem}
\label{lem:reduction}
Let $\mathcal{A}$ be a $*$-subalgebra of
$\mathbb{C}^{m\times m}$ and $\mathcal{B}$ be a $*$-subalgebra
of $\mathbb{C}^{d\times d}$ such that the following conditions hold:
$$
	b\in\mathcal{B},\qquad
	\sum_{i=1}^n a_iya_i^*\in\mathcal{B}
	\squad\text{for all}\squad y\in\mathcal{A},\qquad
	\sum_{j=1}^n a_j^*za_j\in\mathcal{A}
	\squad\text{for all}\squad z\in\mathcal{B}.
$$
Then for $x$ as in \eqref{eq:mainx}, all the conclusions of
Theorem \ref{thm:main} remain valid if the infimum 
\emph{(}supremum\emph{)} is taken only over $z\in\mathcal{B}$.
\end{lem}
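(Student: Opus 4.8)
The plan is to exploit the symmetry hypotheses to show that the variational problem in Theorem~\ref{thm:main}, a priori an infimum over all positive-definite $z \in \mathbb{C}^{d\times d}_{\rm s.a.}$, is unchanged when restricted to $z \in \mathcal{B}$. The key observation is that the conditions on $\mathcal{A}$ and $\mathcal{B}$ make the objective functional equivariant under the action of the compact group of unitaries $U(\mathcal{B}) = \{u \in \mathcal{B} : uu^* = \id\}$ (where $\id$ here denotes the identity of $\mathcal{B}$, which by the standard structure theory of $*$-subalgebras of $\mathbb{C}^{d\times d}$ may be assumed to be the full identity after passing to the direct summand on which it acts as a unit; I would remark on this normalization at the start). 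Indeed, writing
$$
	\Phi(z) = b + z^{-1} + \sum_{i=1}^n a_i\Bigl(\id - \textstyle\sum_{j=1}^n a_j^* z a_j\Bigr)^{-1}a_i^*,
$$
the first hypothesis gives $b \in \mathcal{B}$; the third hypothesis shows $\sum_j a_j^* z a_j \in \mathcal{A}$ whenever $z \in \mathcal{B}$, hence so is $(\id - \sum_j a_j^* z a_j)^{-1}$ (a limit of polynomials in an element of the $*$-algebra $\mathcal{A}$); and then the second hypothesis shows $\sum_i a_i(\cdots)^{-1}a_i^* \in \mathcal{B}$. Thus $\Phi$ maps $\mathcal{B}$ into $\mathcal{B}$. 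Moreover, for $u \in U(\mathcal{B})$ one checks directly that $\Phi(u z u^*) = u\,\Phi(z)\,u^*$: the terms $b$ and $z^{-1}\mapsto u z^{-1}u^*$ are immediate, and for the last term one uses that conjugation by $u$ commutes with $y \mapsto \sum_j a_j^* y a_j$ composed with inversion — but wait, this is not automatic, since $u \notin \mathcal{A}$ in general. The correct statement is weaker and is exactly what the hypotheses are designed to give: the \emph{value} $\lambda_{\rm max}(\Phi(z))$ is invariant under replacing $z$ by its average over $U(\mathcal{B})$, via convexity, not via exact equivariance. Let me restructure accordingly.

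The actual argument proceeds by a convexity/averaging step. I would first establish that $z \mapsto \Phi(z)$ is \emph{operator convex} on the cone $\{z > 0 : \sum_j a_j^* z a_j < \id\}$ — this follows because $z \mapsto z^{-1}$ is operator convex on positive matrices, because $z \mapsto \sum_j a_j^* z a_j$ is linear, and because $y \mapsto (\id - y)^{-1} = \sum_{k\ge 0} y^k$ is operator convex on $\{0 \le y < \id\}$ and operator monotone, and composition of an operator convex operator monotone function with a linear map is operator convex; finally $y \mapsto \sum_i a_i y a_i^*$ is linear and positive, preserving the operator convex order. Consequently $z \mapsto \lambda_{\rm max}(\Phi(z))$ is a convex function. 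Second, I would introduce the conditional expectation $E_{\mathcal{B}} : \mathbb{C}^{d\times d} \to \mathcal{B}$ (the trace-preserving projection onto the $*$-subalgebra $\mathcal{B}$, which is completely positive and unital). The crucial claim is that for any admissible $z$, the element $E_{\mathcal{B}}(z)$ is again admissible and satisfies $\lambda_{\rm max}(\Phi(E_{\mathcal{B}}(z))) \le \lambda_{\rm max}(\Phi(z))$. Given this claim, the restricted infimum over $\mathcal{B}$ equals the full infimum, and similarly for the supremum (using operator concavity of $\Phi$ on $\{z<0\}$, or rather the appropriate sign-flipped statement), which proves the lemma; the "multiple of the identity" refinement is inherited because scalar multiples of $\id$ lie in $\mathcal{B}$, so the reduction to $\mathcal{B}$ does not interfere with the further reduction already established in Theorem~\ref{thm:main}.

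The main obstacle — and the step I would spend the most care on — is the crucial claim that $E_{\mathcal{B}}$ respects the objective. Writing $E = E_{\mathcal{B}}$, I need: (i) $E(z) > 0$ when $z > 0$, which is clear since $E$ is positive and faithful; (ii) $\sum_j a_j^* E(z) a_j < \id$, which requires that $E$ does not increase this quantity in the Loewner order — here one uses that, by the third hypothesis, the map $z \mapsto \sum_j a_j^* z a_j$ sends $\mathcal{B}$ to $\mathcal{A}$, together with the Kadison–Schwarz-type inequality, and most importantly that the relevant conditional expectation structures on $\mathcal{A}$ and $\mathcal{B}$ are compatible with $z \mapsto \sum_j a_j^* z a_j$, namely $\sum_j a_j^* E_{\mathcal{B}}(z) a_j = E_{\mathcal{A}}(\sum_j a_j^* z a_j)$; and (iii) the operator-convexity estimate $\Phi(E(z)) \le E'$ for a suitable element $E'$ with $\lambda_{\rm max}(E') \le \lambda_{\rm max}(\Phi(z))$. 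For (iii) the clean route is to write $E(z) = \int u z u^*\,d\mu(u)$ as an average over a suitable compact group of unitaries in $\mathcal{B}$ (or a finite average), use operator convexity of $\Phi$ to get $\Phi(E(z)) \le \int u\,\Phi(z)\,u^*\,d\mu(u)$ — here the equivariance $\Phi(uzu^*) = u\Phi(z)u^*$ for $u$ a unitary in $\mathcal{B}$ \emph{does} hold, because $u \in \mathcal{B}$ forces $u^* a_j^{\phantom{*}}$ interactions to be absorbed correctly: precisely, $\sum_j a_j^*(uzu^*)a_j$ need not equal $u(\sum_j a_j^* z a_j)u^*$, so in fact equivariance fails and one must instead argue that $\int u\Phi(z)u^* d\mu(u) = \Phi(E(z))$ directly is false too. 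The honest fix, which I would adopt, is: bypass equivariance entirely and prove $\lambda_{\rm max}(\Phi(E(z))) \le \lambda_{\rm max}(\Phi(z))$ by combining operator convexity of $\Phi$ in $z$ with the Jensen-type inequality for the completely positive unital map $E$, applied in the form $\Phi(E(z)) \le E(\widetilde\Phi(z))$ where $\widetilde\Phi$ is built from the same operator convex pieces and $E$ acts only where it lands in $\mathcal{B}$ — then $\lambda_{\rm max}(E(\widetilde\Phi(z))) \le \|E\|\,\lambda_{\rm max}(\widetilde\Phi(z)) = \lambda_{\rm max}(\Phi(z))$ since $E$ is unital and positive hence $\lambda_{\rm max}$-contractive, and $\widetilde\Phi(z) = \Phi(z)$ whenever the intermediate quantities already lie in the right subalgebras, which they do by the hypotheses. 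Sorting out this chain of operator-convexity and conditional-expectation inequalities cleanly is the technical heart of the proof.
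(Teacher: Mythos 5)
Your proposal has genuine gaps and does not amount to a proof. The central claim---that replacing an admissible $z$ by its trace-preserving conditional expectation $E_{\mathcal{B}}(z)$ does not increase $\lambda_{\rm max}$ of the objective---is never actually established. You correctly note that equivariance under unitaries of $\mathcal{B}$ fails, but the proposed fix ($\Phi(E(z))\le E(\widetilde\Phi(z))$ with $\widetilde\Phi$ ``built from the same operator convex pieces'') is not an argument: $\widetilde\Phi$ is never defined, and the assertion that ``the intermediate quantities already lie in the right subalgebras'' is false for a general admissible $z\notin\mathcal{B}$, since the hypotheses only constrain the maps on $\mathcal{A}$ and $\mathcal{B}$ themselves. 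What a Jensen-type argument really needs is the intertwining identity $\sum_j a_j^*E_{\mathcal{B}}(z)a_j=E_{\mathcal{A}}\big(\sum_j a_j^*za_j\big)$; this does in fact follow from the hypotheses (the maps $z\mapsto\sum_j a_j^*za_j$ and $y\mapsto\sum_i a_iya_i^*$ are mutually adjoint for the trace inner product and each sends one algebra into the other, hence each sends the orthogonal complement of one algebra into that of the other), but you only wish for it in step (ii) rather than proving it, and it is the crux. Even granting it, two gaps remain. First, the lower edge: on $\{z<0\}$ the term $z^{-1}$ is operator \emph{concave} while $\big(\id-\sum_j a_j^*za_j\big)^{-1}$ is operator \emph{convex}, so Jensen's inequality for the unital positive map $E_{\mathcal{B}}$ pushes the two terms in opposite directions and neither $h(E_{\mathcal{B}}(z))\ge E_{\mathcal{B}}(h(z))$ nor its reverse holds; your sketch does not say how to obtain $\lambda_{\rm min}(h(E_{\mathcal{B}}(z)))\ge\lambda_{\rm min}(h(z))$. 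Second, the ``multiple of the identity'' refinement is not inherited as you claim: the scalar-achieving $z$ of Theorem~\ref{thm:main} need not lie in $\mathcal{B}$, and averaging a $z$ with $h(z)=\lambda\id$ does not produce an element of $\mathcal{B}$ with that property.

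For comparison, the paper's proof is entirely different and sidesteps all of this: it observes that, under the hypotheses, the moment recursions for $C_k,D_k$ in the proof of Theorem~\ref{thm:cauchy} give $C_k\in\mathcal{B}$ and $D_k\in\mathcal{A}$, hence $G(\lambda)\in\mathcal{B}$ for every $\lambda$ off the spectrum. Then, exactly as in the proof of the last part of Theorem~\ref{thm:main}, the explicit witness $z=G(\mu)$ with $\mu$ just beyond the relevant spectral edge is an admissible element of $\mathcal{B}$ satisfying $h(G(\mu))=\mu\id$, which yields the restricted variational formulas \emph{and} the scalar refinement, for both edges, in one stroke. If you wish to salvage your averaging approach, you could at best recover the first (upper-edge) formula after proving the intertwining identity; the supremum formula and the scalar refinement would still require something like the Cauchy-transform witness.
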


Let us illustrate this reduction principle in a special
case that is important in applications. Let $X$ be a $d\times m$ random 
matrix with independent centered Gaussian entries $X_{ij}\sim 
N(0,\sigma_{ij}^2)$. The associated free model $x$ is given by
\begin{equation}
\label{eq:indep}
	x = \sum_{i=1}^d \sum_{j=1}^m
	\sigma_{ij}\,e_ie_j^*\otimes s_{ij}
\end{equation}
where $(s_{ij})_{i\in[d],j\in[m]}$ are free semicircular variables
and $e_1,e_2,\ldots$ denote the standard basis vectors in 
$\mathbb{C}^d$ or $\mathbb{C}^m$. Then we have the following.

\begin{cor}
\label{cor:indep}
Let $x$ have independent entries as in \eqref{eq:indep}, and let
$b$ be a diagonal matrix with diagonal entries $b_1,\ldots,b_d$.
Then we have
$$
	\lambda_{\rm max}(xx^* + b\otimes\id) =
	\inf_{\substack{\min_j v_j>0 \\ 
	\max_j \sum_k \sigma_{kj}^2 v_k < 1}}
	\max_{i\in[d]}
	\Bigg(
	b_i + \frac{1}{v_i} + 
	\sum_{j\in[m]} \frac{\sigma_{ij}^2}{1 - \sum_{k\in[d]} \sigma_{kj}^2v_k}
	\Bigg)
$$
and
$$
	\lambda_{\rm min}(xx^* + b\otimes\id) =
	\sup_{\max_j v_j<0}\,
	\min_{i\in[d]}
	\Bigg(
	b_i + \frac{1}{v_i} + 
	\sum_{j\in[m]} \frac{\sigma_{ij}^2}{1 - \sum_{k\in[d]} \sigma_{kj}^2v_k}
	\Bigg),
$$
where $v\in\mathbb{R}^d$. Moreover, the infimum
\emph{(}supremum\emph{)} can be restricted to those $v$ such that the
vector in the maximum \emph{(}minimum\emph{)} is a multiple of the
constant vector $1$.
\end{cor}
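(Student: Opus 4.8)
The plan is to deduce the corollary directly from the reduction principle, Lemma~\ref{lem:reduction}, by taking $\mathcal{A}$ and $\mathcal{B}$ to be the algebras of diagonal matrices in $\mathbb{C}^{m\times m}$ and $\mathbb{C}^{d\times d}$, respectively, and then evaluating the resulting variational formulas explicitly. To set this up I would first recast \eqref{eq:indep} in the form \eqref{eq:mainx}: the free semicircular family is indexed by pairs $(i,j)\in[d]\times[m]$, with coefficient matrices $a_{(i,j)}=\sigma_{ij}\,e_ie_j^*\in\mathbb{C}^{d\times m}$ (so that ``$n$'' in Theorem~\ref{thm:main} equals $dm$ here), and $a_{(i,j)}^*=\sigma_{ij}\,e_je_i^*$ since $\sigma_{ij}$ is real.

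Next I would check the three hypotheses of Lemma~\ref{lem:reduction}. We have $b\in\mathcal{B}$ since $b$ is diagonal by assumption. For $y=\mathrm{diag}(y_1,\ldots,y_m)\in\mathcal{A}$, a short computation gives $\sum_{i,j}a_{(i,j)}\,y\,a_{(i,j)}^* = \sum_{i,j}\sigma_{ij}^2\,y_j\,e_ie_i^* = \sum_{i\in[d]}\big(\sum_{j\in[m]}\sigma_{ij}^2 y_j\big)e_ie_i^*\in\mathcal{B}$, and symmetrically, for $z=\mathrm{diag}(v_1,\ldots,v_d)\in\mathcal{B}$, $\sum_{i,j}a_{(i,j)}^*\,z\,a_{(i,j)} = \sum_{j\in[m]}\big(\sum_{i\in[d]}\sigma_{ij}^2 v_i\big)e_je_j^*\in\mathcal{A}$. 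Thus Lemma~\ref{lem:reduction} applies, and in both formulas of Theorem~\ref{thm:main} it suffices to range over diagonal $z=\mathrm{diag}(v_1,\ldots,v_d)$ with $v\in\mathbb{R}^d$.

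It then remains to unwind the expression on this restricted domain. The constraint $z>0$ becomes $\min_i v_i>0$ (and $z<0$ becomes $\max_i v_i<0$); the matrix $\id-\sum_j a_j^*za_j$ is diagonal with $j$-th entry $1-\sum_{k\in[d]}\sigma_{kj}^2v_k$, so $\sum_j a_j^*za_j<\id$ becomes $\max_{j\in[m]}\sum_k\sigma_{kj}^2v_k<1$, and its inverse is diagonal with entries $(1-\sum_k\sigma_{kj}^2v_k)^{-1}$. Consequently $b+z^{-1}+\sum_i a_i(\id-\sum_j a_j^*za_j)^{-1}a_i^*$ is the diagonal matrix whose $i$-th diagonal entry is $b_i+v_i^{-1}+\sum_{j\in[m]}\sigma_{ij}^2(1-\sum_k\sigma_{kj}^2v_k)^{-1}$. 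Since $\lambda_{\rm max}$ (resp.\ $\lambda_{\rm min}$) of a real diagonal matrix is the maximum (resp.\ minimum) of its diagonal entries, the two formulas of Theorem~\ref{thm:main} reduce to precisely the asserted scalar formulas. Finally, for diagonal $z$ the matrix inside $\lambda_{\rm max}$ (resp.\ $\lambda_{\rm min}$) is itself diagonal, so being a multiple of $\id$ is equivalent to all its diagonal entries being equal, i.e.\ to the vector in the maximum (resp.\ minimum) being a multiple of the constant vector $1$; this yields the last claim.

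Since every step is a direct computation once Lemma~\ref{lem:reduction} is available, I do not expect a genuine obstacle. The only points that require some care are correctly matching the double index $(i,j)$ of \eqref{eq:indep} with the single semicircular index of \eqref{eq:mainx} when verifying the algebra conditions, and consistently tracking which summation index runs over $[d]$ and which over $[m]$ in the nested sums.
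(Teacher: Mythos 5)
Your proposal is correct and follows essentially the same route as the paper: apply Lemma~\ref{lem:reduction} with $\mathcal{A},\mathcal{B}$ the diagonal $*$-algebras and then evaluate the formulas of Theorem~\ref{thm:main} on diagonal $z$. You merely spell out the ``readily verified'' computations that the paper leaves implicit, and your index bookkeeping is accurate.
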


\begin{proof}
Let $\mathcal{A}$ and $\mathcal{B}$ be the $*$-algebras of diagonal 
matrices
in $\mathbb{C}^{m\times m}$ and $\mathbb{C}^{d\times d}$, respectively.
Then it is readily verified that the assumptions of Lemma 
\ref{lem:reduction} are satisfied, and the conclusion follows by
restricting the variational principles of Theorem~\ref{thm:main} to
diagonal matrices $z$ with diagonal entries $v_1,\ldots,v_d$.
\end{proof}

An application of Corollary \ref{cor:indep} to phase transitions of 
nonhomogeneous sample covariance matrices is developed in \cite[\S 3.6 and 
\S 8.5]{BCSV24}.

\subsection{Organization of this note}

In \S\ref{sec:prelim}, we 
recall some results of matrix algebra and the formulas of Lehner that will 
be used in the proofs. The first variational formula of 
Theorem~\ref{thm:main} is proved in \S\ref{sec:upper}, and the second 
variational formula is proved in \S\ref{sec:lower}. The last 
statement of Theorem~\ref{thm:main} and Lemma~\ref{lem:reduction} are 
proved in \S\ref{sec:cauchy}.

\section{Preliminaries}
\label{sec:prelim}

\subsection{Matrix algebra}

The following two theorems of matrix algebra
will play an important role in our proofs. The first is a classical
result on Schur complements.

\begin{lem}[Schur complements]
\label{lem:schur}
Let 
$A\in\mathbb{C}^{d_1\times d_1}_{\rm s.a.}$, 
$B\in\mathbb{C}^{d_1\times d_2}$, 
$D\in\mathbb{C}^{d_2\times d_2}_{\rm s.a.}$, and
$$
	M = \begin{bmatrix}
	A & B \\
	B^* & D
	\end{bmatrix},
$$
and assume that $D$ is invertible. Define the Schur complement
$$
	M/D = A-BD^{-1}B^*.
$$
Then 
$$
	M>0\qquad \text{if and only if}\qquad
	D>0\squad\text{and}\squad M/D>0.
$$
Moreover, if $M$ and $M/D$ are invertible, we have
$$
	M^{-1} =
	\begin{bmatrix}
	(M/D)^{-1} & -(M/D)^{-1}BD^{-1} \\
	-D^{-1}B^*(M/D)^{-1} & D^{-1} + D^{-1}B^*(M/D)^{-1}BD^{-1}
	\end{bmatrix}.
$$
\end{lem}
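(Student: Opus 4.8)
The plan is to establish both assertions at once from a single block factorization of $M$ that implements Gaussian elimination of the $(2,2)$ block. I would set
$$
	L = \begin{bmatrix} \id & BD^{-1} \\ 0 & \id \end{bmatrix},
$$
an invertible matrix whose inverse is obtained by negating the off-diagonal block, and then verify — by a direct block multiplication, using that $D=D^{*}$ forces $(BD^{-1})^{*}=D^{-1}B^{*}$ — the identity
$$
	M = L\begin{bmatrix} M/D & 0 \\ 0 & D \end{bmatrix}L^{*}.
$$
This factorization is the only genuine computation in the proof, and it is entirely routine.

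For the first assertion, I would note that invertibility of $L$ makes $w\mapsto (L^{*})^{-1}w$ a bijection of $\mathbb{C}^{d_1+d_2}$, so $M>0$ is equivalent to positivity of $\langle L^{-1}M(L^{*})^{-1}w,\,w\rangle = \langle \mathrm{diag}(M/D,D)\,w,\,w\rangle$ for all $w\ne 0$; that is, $M>0$ if and only if $\mathrm{diag}(M/D,D)>0$, which holds precisely when $M/D>0$ and $D>0$. (This is nothing but the positive-definite case of Sylvester's law of inertia applied to the congruence $M\mapsto L^{-1}M(L^{-1})^{*}$.)

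For the inverse formula, I would first observe that taking determinants in the factorization gives $\det M=\det(M/D)\det D$, so once $M$ and $D$ are invertible so is $M/D$. Inverting both sides of the factorization then gives
$$
	M^{-1}=(L^{*})^{-1}\begin{bmatrix}(M/D)^{-1} & 0 \\ 0 & D^{-1}\end{bmatrix}L^{-1},
$$
and multiplying out the three factors, with $L^{-1}$ and $(L^{*})^{-1}$ the explicit unit-triangular matrices from the first step, reproduces the displayed expression block by block.

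I do not expect any real obstacle: the lemma is classical, and the block factorization does all the work. The only point requiring a little care is the bookkeeping of adjoints over $\mathbb{C}$ — self-adjointness of $D$ is exactly what makes $L^{*}$ the clean unit-triangular matrix used above, and self-adjointness of $A$ is what guarantees $M/D$ is self-adjoint, so that the inequality $M/D>0$ is meaningful.
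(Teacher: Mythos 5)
Your proof is correct. Note, however, that the paper does not prove this lemma at all: it simply cites the two statements to Zhang's book on Schur complements ([Zha05, Theorems~1.12 and~1.2]), so there is no internal argument to compare against. What you supply is the standard self-contained proof via the block congruence
$$
	M = \begin{bmatrix} \id & BD^{-1} \\ 0 & \id \end{bmatrix}
	\begin{bmatrix} M/D & 0 \\ 0 & D \end{bmatrix}
	\begin{bmatrix} \id & 0 \\ D^{-1}B^* & \id \end{bmatrix},
$$
which indeed yields the positivity equivalence (by invertible congruence, i.e.\ the definite case of Sylvester's inertia argument) and, after inverting the three factors, exactly the displayed formula for $M^{-1}$; your bookkeeping of adjoints (using $D=D^*$ so that $(BD^{-1})^*=D^{-1}B^*$, and $A=A^*$ so that $M/D$ is self-adjoint) is the right point to flag. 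The only superfluous step is the determinant remark: invertibility of $M/D$ is already a hypothesis of the second assertion, so you need not rederive it, though deducing it from the invertibility of $M$ and $D$ is harmless and correct. In short, your argument is the same one the cited reference uses, and it would make the lemma self-contained where the paper chooses to rely on the literature.
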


\smallskip

The first statement can be found in \cite[Theorem 1.12]{Zha05} and
the second statement can be found in \cite[Theorem 1.2]{Zha05}.

The second result is a simple matrix inversion identity that is closely
related to the Schur complement. Its correctness is readily verified
algebraically.

\begin{lem}[Matrix inversion lemma]
\label{lem:woodbury}
Let $B\in\mathbb{C}^{d_1\times d_2}$ and
$D\in\mathbb{C}^{d_2\times d_2}_{\rm s.a.}$. Then
$$
	(\id - BD^{-1}B^*)^{-1} =
	\id + B(D-B^*B)^{-1}B^*,
$$
provided that all the inverses in this identity exist.
\end{lem}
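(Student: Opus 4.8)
The plan is to verify the identity by a short direct computation, in the spirit of the paper's remark that it is readily verified algebraically. Since $\id - BD^{-1}B^*$ and its proposed inverse are both square $d_1\times d_1$ matrices, it suffices to check that $\id + B(D - B^*B)^{-1}B^*$ is a one-sided inverse of $\id - BD^{-1}B^*$; a square complex matrix possessing a one-sided inverse is invertible, with that matrix as its two-sided inverse. I would therefore expand
$$
(\id - BD^{-1}B^*)\bigl(\id + B(D - B^*B)^{-1}B^*\bigr)
= \id + B\bigl[(D - B^*B)^{-1} - D^{-1} - D^{-1}B^*B(D - B^*B)^{-1}\bigr]B^*,
$$
so that everything reduces to showing that the $d_2\times d_2$ bracketed expression vanishes.

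The one mildly clever step is how to see this: insert $\id = (D - B^*B)(D - B^*B)^{-1}$ into the last term, so that
$$
D^{-1} + D^{-1}B^*B(D - B^*B)^{-1}
= D^{-1}\bigl[(D - B^*B) + B^*B\bigr](D - B^*B)^{-1}
= D^{-1}D(D - B^*B)^{-1}
= (D - B^*B)^{-1},
$$
which cancels exactly the remaining term $(D - B^*B)^{-1}$ inside the bracket. This uses only the existence of $D^{-1}$ and $(D - B^*B)^{-1}$, both of which are among the hypothesized inverses, so the argument is complete.

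For context I would also record the conceptual provenance of the identity via Schur complements. Applying Lemma~\ref{lem:schur} to $M = \begin{bmatrix} \id & B \\ B^* & D \end{bmatrix}$ (whose diagonal blocks are self-adjoint and whose Schur complement $M/D = \id - BD^{-1}B^*$ is invertible by the proviso) identifies the $(1,1)$-block of $M^{-1}$ as $(\id - BD^{-1}B^*)^{-1}$. Applying the same lemma to the matrix $\begin{bmatrix} D & B^* \\ B & \id \end{bmatrix}$ obtained by interchanging the two block rows and the two block columns of $M$ — whose Schur complement with respect to $\id$ is $D - B^*B$ — identifies that same block, now appearing in the $(2,2)$-position, as $\id + B(D - B^*B)^{-1}B^*$. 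Equating the two expressions yields the lemma.

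I do not anticipate a genuine obstacle: the statement is a formal identity in $B$, $B^*$, $D$ valid over any unital ring once the indicated inverses exist, and in particular $B$ itself need neither be square nor invertible. The only point requiring attention is the bookkeeping of which matrices must be invertible — namely $D$, $D - B^*B$, and $\id - BD^{-1}B^*$ — and these are precisely the inverses appearing in the statement.
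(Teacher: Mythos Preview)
Your proposal is correct and matches the paper's treatment: the paper does not give a proof but simply notes that the identity ``is readily verified algebraically,'' and your first paragraph carries out precisely that verification. The Schur complement remark is a nice bonus but not needed.
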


\subsection{Lehner's formulas}
\label{sec:lehner}

Theorem~\ref{thm:lehner} is essentially \cite[Corollary~1.5]{Leh99}, but 
it is phrased there in a slightly different manner. For completeness, we 
spell out the straightforward translation to the form given here.

\begin{proof}[Proof of Theorem \ref{thm:lehner}]
Choose $c>0$ so that $a_0+c\id\ge 0$ and
$x+c\id\ge 0$ (this is possible since $x$ is a bounded operator).
By \cite[Corollary~1.5]{Leh99}, we have
$$
	\|x+c\id\| = \inf_{z>0}\Bigg\|
	a_0+c\id + z^{-1} + \sum_{i=1}^n a_i z a_i
	\Bigg\|,
$$
where the infimum may also be restricted to those $z$ such that
the matrix inside the norm is a multiple of the identity. The conclusion
follows by applying to both sides of this identity the elementary fact,
that if $y$ is self-adjoint and $y+c\id\ge 0$, then
$\|y+c\id\| = \lambda_{\rm max}(y+c\id) = \lambda_{\rm max}(y)+c$.
\end{proof}

For the proof of the second formula of Theorem \ref{thm:main}, 
we will need a more general form Theorem \ref{thm:lehner}.
To state it, we must first recall a standard construction of a free 
semicircular family on the free Fock space, 
cf.\ \cite[pp.\ 102--108]{NS06}. Let
$$
	\mathcal{F}(\mathbb{C}^n) :=
	\bigoplus_{k=0}^\infty (\mathbb{C}^n)^{\otimes k}
$$
be the free Fock space over $\mathbb{C}^n$, where by convention
$(\mathbb{C}^n)^{\otimes 0}$ denotes the one-dimensional Hilbert space
spanned by the vacuum state $\Omega$. For $i=1,\ldots,n$, 
we define the creation operator $l_i$ on $\mathcal{F}(\mathbb{C}^n)$ by
$$
	l_i(x_1\otimes\cdots\otimes x_n) := e_i\otimes x_1\otimes\cdots\otimes x_n,
$$
where $e_1,\ldots,e_n$ is the coordinate basis in $\mathbb{C}^n$,
and we define
$$
	s_i := l_i + l_i^*.
$$
Then $s_1,\ldots,s_n$ is a free semicircular family. Moreover, we have
the identities
\begin{equation}
\label{eq:fockid}
	l_i^*l_j = 1_{i=j}\id,\qquad\quad
	\sum_{i=1}^n l_il_i^* = \id - p_\Omega,
\end{equation}
where $p_\Omega$ denotes the orthogonal projection on
$(\mathbb{C}^n)^{\otimes 0}$.

We now state a generalization of Theorem \ref{thm:lehner} to self-adjoint 
linear combinations of the operators $l_i,p_\Omega$ with matrix 
coefficients.

\begin{thm}[Lehner]
\label{thm:lehnerfock}
Let $a_0,b\in \mathbb{C}^{d\times d}_{\rm s.a.}$ and
$a_1,\ldots,a_n\in \mathbb{C}^{d\times d}$. Then
$$
	\lambda_{\rm max}\Bigg(
	a_0\otimes\id + b\otimes p_\Omega +
	\sum_{i=1}^n \big( a_i\otimes l_i+a_i^*\otimes l_i^*\big)
	\Bigg) = 
	\inf_{\substack{z>0 \\ z^{-1}>b}}
	\lambda_{\rm max}\Bigg(
	a_0 + z^{-1} + \sum_{i=1}^n a_i^* z a_i
	\Bigg),
$$
where $z\in \mathbb{C}^{d\times d}_{\rm s.a.}$. Moreover, the infimum can 
be restricted to those $z$ such that the matrix in $\lambda_{\rm 
max}({\cdots})$ is a multiple of the identity.
\end{thm}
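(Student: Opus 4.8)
The plan is to deduce Theorem~\ref{thm:lehnerfock} from the computation of norms of free operators with matrix coefficients carried out by Lehner \cite{Leh99}, in the same spirit as the derivation of Theorem~\ref{thm:lehner} given above. As there, the first step is to pass from $\lambda_{\rm max}$ to an operator norm: writing $T:=a_0\otimes\id+b\otimes p_\Omega+\sum_i(a_i\otimes l_i+a_i^*\otimes l_i^*)$ for the operator inside $\lambda_{\rm max}(\cdots)$ on the left-hand side, one chooses $c>0$ large enough that $T+c\,\id\ge 0$ and $a_0+c\,\id\ge 0$, and observes that then $a_0+c\,\id+z^{-1}+\sum_i a_i^*za_i\ge 0$ for every self-adjoint $z>0$. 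Using the elementary identity $\|y+c\,\id\|=\lambda_{\rm max}(y)+c$ for self-adjoint $y$ with $y+c\,\id\ge 0$, the asserted formula is then equivalent to the corresponding formula for $\|T+c\,\id\|$. Since $T+c\,\id$ is a self-adjoint affine combination of the free creation operators $l_i$, their adjoints, and the vacuum projection $p_\Omega$, with matrix coefficients, it is precisely of the form to which Lehner's matricial Cauchy transform method applies; the variational formula for its norm in \cite{Leh99} translates into the one stated here, and the bulk of the work lies in verifying that the two normalizations and constraint sets coincide.

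One direction of the equality admits a transparent direct proof, which also makes the constraint $z^{-1}>b$ natural. For any self-adjoint $z>0$, the relation $\sum_i l_i l_i^*=\id-p_\Omega$ from \eqref{eq:fockid} gives, with $R_i:=z^{1/2}a_i\otimes\id-z^{-1/2}\otimes l_i^*$, the completion-of-squares identity
$$
	\mu\,\id - T
	= \Bigg(\mu\,\id - a_0 - z^{-1} - \sum_{i=1}^n a_i^* z a_i\Bigg)\otimes\id
	+ (z^{-1}-b)\otimes p_\Omega + \sum_{i=1}^n R_i^* R_i .
$$
Because $\sum_i R_i^*R_i\ge 0$, it follows that whenever $z>0$, $z^{-1}>b$ and $\mu\ge\lambda_{\rm max}(a_0+z^{-1}+\sum_i a_i^*za_i)$, the right-hand side is positive semidefinite, hence $\lambda_{\rm max}(T)\le\mu$; taking the infimum over admissible $z$ yields ``$\le$'' in Theorem~\ref{thm:lehnerfock}.

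The reverse inequality is the substantive content, and is where Lehner's work is genuinely needed: one must show that the infimum is, in the limit, attained. For $\mu>\lambda_{\rm max}(T)$ the operator $(\mu\,\id-T)^{-1}$ is bounded and positive, and its compression $G(\mu)\in\mathbb{C}^{d\times d}$ to $\mathbb{C}^d\otimes\mathbb{C}\Omega\cong\mathbb{C}^d$ is positive definite. Exploiting the self-similar decomposition $\mathcal{F}(\mathbb{C}^n)=\mathbb{C}\Omega\oplus\bigoplus_i e_i\otimes\mathcal{F}(\mathbb{C}^n)$ — under which the $l_i$ act recursively and $p_\Omega$ is supported on the vacuum line — together with the Schur complement inversion formula (Lemma~\ref{lem:schur}) and Lemma~\ref{lem:woodbury}, one shows that $G(\mu)$ satisfies a finite-dimensional Dyson-type fixed-point equation, and unwinding it produces an admissible $z=z(\mu)$ for which $a_0+z^{-1}+\sum_i a_i^*za_i=\mu\,\id$ exactly. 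Letting $\mu\downarrow\lambda_{\rm max}(T)$ then gives ``$\ge$'', and since each such $z(\mu)$ makes the matrix in $\lambda_{\rm max}(\cdots)$ a multiple of the identity, it also yields the final assertion. I expect the main obstacle to be precisely the bookkeeping in this self-similar resolvent computation — in particular, tracking how the cross-terms between the $n$ summands $e_i\otimes\mathcal{F}(\mathbb{C}^n)$ collapse into a single matrix equation — which is carried out in \cite{Leh99}; here one need only quote the resulting formula and check that it matches the identity above.
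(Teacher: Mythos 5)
Your proposal is correct and ultimately rests on the same source as the paper: the paper's entire proof of Theorem~\ref{thm:lehnerfock} is a citation of \cite[Theorem 1.8]{Leh99}, combined with the shift-by-$c$ device from the proof of Theorem~\ref{thm:lehner} and the substitution $z+b\leftarrow z^{-1}$ (which is exactly how the constraint $z^{-1}>b$ arises from Lehner's normalization). Where you genuinely differ is that you prove the inequality ``$\le$'' from scratch: your completion-of-squares identity checks out, since by \eqref{eq:fockid} one has $\sum_i R_i^*R_i=\sum_i a_i^*za_i\otimes\id-\sum_i\big(a_i\otimes l_i+a_i^*\otimes l_i^*\big)+z^{-1}\otimes(\id-p_\Omega)$, so the claimed decomposition of $\mu\,\id-T$ holds, and this makes the admissibility condition $z^{-1}>b$ transparent in a way the citation-only route does not. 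For ``$\ge$'' and for the final assertion about restricting to $z$ with $a_0+z^{-1}+\sum_i a_i^*za_i$ a multiple of the identity, you only sketch the resolvent-compression/fixed-point argument and then defer to \cite{Leh99}; that is no worse than the paper, which defers the whole statement, but be aware that a self-contained version of your sketch would still have to verify that the fixed point $z(\mu)$ is admissible (in particular that $z(\mu)>0$ and $z(\mu)^{-1}>b$), which your outline asserts without argument and which is precisely the bookkeeping Lehner's theorem supplies.
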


\begin{proof}
This follows from \cite[Theorem 1.8]{Leh99} as in the above proof of 
Theorem 
\ref{thm:lehner} and with the substitution $z+b \leftarrow z^{-1}$.
\end{proof}

We finally recall a classical dilation result that will be used in 
conjunction with Lehner's formulas; cf.\ \cite[Lemma 4.9]{BBV23}.

\begin{lem}
\label{lem:dilation}
Let $y:H_1\to H_2$ be a bounded linear operator between Hilbert spaces
$H_1$ and $H_2$, and let $\tilde y$ be the bounded operator on
$H_1\oplus H_2$ defined by
$$
	\tilde y = \begin{bmatrix}
	0 & y^* \\ y & 0
	\end{bmatrix}.
$$
Then $\spc(\tilde y) \cup \{0\} =
\spc((yy^*)^{1/2}) \cup {-\spc((yy^*)^{1/2})} \cup \{0\}$.
\end{lem}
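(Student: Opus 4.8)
The plan is to reduce the assertion to an elementary fact about the square of the self‑adjoint dilation $\tilde y$. First I would note that $\tilde y$ is bounded and that $\tilde y^*=\tilde y$ by the block structure, and compute
$$
	\tilde y^2 = \begin{bmatrix} y^*y & 0 \\ 0 & yy^* \end{bmatrix}.
$$
Since the spectrum of a block‑diagonal operator on $H_1\oplus H_2$ is the union of the spectra of the blocks, this gives $\spc(\tilde y^2)=\spc(y^*y)\cup\spc(yy^*)$.

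Next I would invoke the classical identity $\spc(AB)\setminus\{0\}=\spc(BA)\setminus\{0\}$ for bounded operators $A,B$ between Hilbert spaces: if $\lambda\ne 0$ and $(AB-\lambda)^{-1}=C$ exists, then $\lambda^{-1}(BCA-\id)$ is a two‑sided inverse of $BA-\lambda$, as one verifies directly using $ABC=\id+\lambda C$ and $CAB=\id+\lambda C$. Applying this with $A=y$, $B=y^*$ yields $\spc(y^*y)\cup\{0\}=\spc(yy^*)\cup\{0\}$, and hence
$$
	\spc(\tilde y^2)\cup\{0\} = \spc(yy^*)\cup\{0\}.
$$

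I would then combine two structural facts about $\tilde y$. Because $\tilde y$ is self‑adjoint, $\spc(\tilde y)\subseteq\mathbb{R}$ and the spectral mapping theorem applied to $t\mapsto t^2$ gives $\spc(\tilde y^2)=\{t^2:t\in\spc(\tilde y)\}$. Moreover, conjugating $\tilde y$ by the self‑adjoint unitary $\mathrm{diag}(\id_{H_1},-\id_{H_2})$ sends $\tilde y$ to $-\tilde y$, so $\spc(\tilde y)=-\spc(\tilde y)$ is symmetric about the origin. Thus $\spc(\tilde y)\cup\{0\}$ is a negation‑symmetric subset of $\mathbb{R}$ whose squares, together with $0$, form the set $\spc(yy^*)\cup\{0\}$; and since $yy^*\ge 0$, spectral mapping for the square‑root function gives $\spc((yy^*)^{1/2})=\{\sqrt\mu:\mu\in\spc(yy^*)\}$. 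A short set‑theoretic argument — a nonzero $\mu$ lies in $\spc(yy^*)$ iff both $\pm\sqrt\mu$ lie in $\spc(\tilde y)$, using the negation symmetry — then produces
$$
	\spc(\tilde y)\cup\{0\} = \spc((yy^*)^{1/2})\cup{-\spc((yy^*)^{1/2})}\cup\{0\}.
$$

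The one delicate point, and the reason the statement carries a $\cup\{0\}$, is the bookkeeping at the origin: when $\dim H_1\ne\dim H_2$, or when $y$ is not injective or not surjective, $0$ need not lie in $\spc(y^*y)$, $\spc(yy^*)$, or $\spc(\tilde y)$, so the passages $\lambda\leftrightarrow\lambda^2$ and $\mu\mapsto\sqrt\mu$ are not quite bijections on the bare spectra. I would sidestep this uniformly by adjoining $0$ to every set from the outset, as above, so that each intermediate identity already contains $0$ and the square‑root extraction becomes a genuine bijection. No individual step is hard; the only thing to be careful about is to keep the "$\cup\{0\}$" attached throughout.
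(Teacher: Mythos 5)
Your argument is correct and complete. Note that the paper itself gives no proof of this lemma: it is quoted as a classical dilation result with a citation to \cite[Lemma 4.9]{BBV23}, so there is no in-text argument to compare against. Your route is the standard one and all the steps check out: $\tilde y^2=\mathrm{diag}(y^*y,\,yy^*)$, the identity $\spc(AB)\setminus\{0\}=\spc(BA)\setminus\{0\}$ (your explicit inverse $\lambda^{-1}(BCA-\id)$ does verify as a two-sided inverse of $BA-\lambda$), the symmetry $\spc(\tilde y)=-\spc(\tilde y)$ via conjugation by $\mathrm{diag}(\id_{H_1},-\id_{H_2})$, and spectral mapping for $t\mapsto t^2$ and $\mu\mapsto\sqrt{\mu}$ on the positive operator $yy^*$. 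You are also right that the only delicate point is the bookkeeping at $0$ (e.g.\ when $y$ fails to be injective or surjective), and adjoining $\{0\}$ to every set from the outset handles it exactly as the statement of the lemma intends; this is precisely why the lemma is stated with ``$\cup\{0\}$'' on both sides.
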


\section{The upper edge}
\label{sec:upper}

The aim of this section is to prove the first variational formula of 
Theorem~\ref{thm:main}. This formula will be deduced from 
Theorem~\ref{thm:lehner} by a linearization argument. As we will see, 
however, the reduction to the simple form given in Theorem~\ref{thm:main} 
requires a careful analysis of the resulting optimization problem.

Let us note at the outset that as $\lambda_{\rm max}(y+c\id)= \lambda_{\rm 
max}(y)+c$ for any $c\in\mathbb{R}$, the variational formula in 
Theorem~\ref{thm:main} is unchanged if we replace $b$ by $b+c\id$. By 
choosing $c$ large enough, we may assume without loss of generality that 
$b>0$. This assumption will henceforth be made throughout this section.
In particular, this enables us to write $b=a_0a_0^*$ for some 
$a_0\in\mathbb{C}^{d\times d}$.

We begin with a reduction to the setting of
Theorem \ref{thm:lehner}.

\begin{lem}
\label{lem:lin}
Let $r=m+2d$ and
$$
	\tilde x = \begin{bmatrix}
	0 & 0 & a_0^*\otimes\id \\
	0 & 0 & x^* \\
	a_0\otimes\id & x & 0
	\end{bmatrix} =
	\tilde a_0 + \sum_{i=1}^n \tilde a_i\otimes s_i,
$$
where for $i=1,\ldots,n$ we define the $r\times r$ matrices
$$
	\tilde a_0 = 
	\begin{bmatrix}
	0 & 0 & a_0^* \\
	0 & 0 & 0 \\
	a_0 & 0 & 0
	\end{bmatrix},\qquad\quad
	\tilde a_i = 
	\begin{bmatrix}
	0 & 0 & 0 \\
	0 & 0 & a_i^* \\
	0 & a_i & 0
	\end{bmatrix}.
$$
Then $\lambda_{\rm max}(\tilde x)
=\lambda_{\rm max}(xx^*+b\otimes\id)^{1/2}$.
\end{lem}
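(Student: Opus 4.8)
The plan is to recognize $\tilde x$ as the self-adjoint dilation of a single off-diagonal operator, and then to read off $\lambda_{\rm max}(\tilde x)$ from Lemma~\ref{lem:dilation}. Realizing the semicircular family $s_1,\ldots,s_n$ concretely as operators on a Hilbert space $H$, we may regard $x$ as a bounded operator from $\mathbb{C}^m\otimes H$ to $\mathbb{C}^d\otimes H$. Grouping the first two block rows and columns of $\tilde x$ into one, that is, setting $H_1=(\mathbb{C}^d\oplus\mathbb{C}^m)\otimes H$ and $H_2=\mathbb{C}^d\otimes H$, we obtain
$$
\tilde x=\begin{bmatrix}0 & y^*\\ y & 0\end{bmatrix},\qquad
y=\begin{bmatrix}a_0\otimes\id & x\end{bmatrix}:H_1\to H_2.
$$
(Since $\lambda_{\rm max}$ of a self-adjoint operator depends only on its spectrum, and the spectrum is unaffected by passing to a faithful representation, this realization is harmless.)

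Next I would compute $yy^*$. As $s_i=s_i^*$, we have $x^*=\sum_i a_i^*\otimes s_i$, so
$$
yy^*=(a_0\otimes\id)(a_0^*\otimes\id)+xx^*=(a_0a_0^*)\otimes\id+xx^*=b\otimes\id+xx^*,
$$
using the factorization $b=a_0a_0^*$ fixed at the start of the section. Lemma~\ref{lem:dilation} then gives, with $S:=\spc\big((b\otimes\id+xx^*)^{1/2}\big)$,
$$
\spc(\tilde x)\cup\{0\}=S\cup(-S)\cup\{0\}.
$$
Here the standing assumption $b>0$ enters: it forces $b\otimes\id+xx^*\ge\lambda_{\rm min}(b)\,\id>0$, so $S$ is a nonempty subset of $[\lambda_{\rm min}(b)^{1/2},\infty)$; in particular $0\notin S$ and $\sup S>0$, while $-S\subseteq(-\infty,0)$. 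Taking suprema on both sides of the last display, the right-hand side equals $\sup S$, whereas the left-hand side equals $\max(\lambda_{\rm max}(\tilde x),0)$; since $\sup S>0$, this forces $\lambda_{\rm max}(\tilde x)=\sup S$. Finally $\sup S=\lambda_{\rm max}\big((b\otimes\id+xx^*)^{1/2}\big)=\lambda_{\rm max}(xx^*+b\otimes\id)^{1/2}$, which is the assertion.

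The computation is short, and no serious difficulty arises; the only point requiring care is the bookkeeping with the spurious $\{0\}$ in Lemma~\ref{lem:dilation}. One must check that $\sup\spc(\tilde x)$ is governed by the positive part $S$ of the symmetrized spectrum and is not altered by adjoining the origin --- which is exactly what the preliminary reduction to $b>0$ guarantees, since it moves $\spc(xx^*+b\otimes\id)$ strictly away from $0$.
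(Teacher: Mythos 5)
Your proof is correct and follows the same route as the paper, whose entire argument is to invoke Lemma~\ref{lem:dilation} after recognizing $\tilde x$ as the self-adjoint dilation of $y=\begin{bmatrix} a_0\otimes\id & x\end{bmatrix}$ with $yy^*=xx^*+b\otimes\id$. Your additional bookkeeping with the spurious $\{0\}$ (using the standing reduction to $b>0$) is a correct spelling-out of the "follows directly" step.
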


\begin{proof}
This follows directly from Lemma \ref{lem:dilation}.
\end{proof}

Theorem \ref{thm:lehner} immediately yields the following.

\begin{cor}
\label{cor:upperlehner}
Define 
$$
	f(\tilde z) = \tilde a_0 + \tilde z^{-1}
	+ \sum_{i=1}^n \tilde a_i \tilde z\tilde a_i.
$$
Then we have
$$
	\lambda_{\rm max}(xx^*+b\otimes\id)^{1/2} =
	\inf_{\tilde z>0} \lambda_{\rm max}(f(\tilde z)) = 
	\inf_{\lambda>0} \inf_{\substack{\tilde z>0\\ f(\tilde z)=\lambda\id}}
	\lambda_{\rm max}(f(\tilde z)),
$$
where the infimum is taken over $\tilde z\in\mathbb{C}^{r\times r}_{\rm 
s.a.}$.
\end{cor}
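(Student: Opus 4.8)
The plan for Corollary~\ref{cor:upperlehner} is short, since it is essentially a restatement of Lehner's theorem for the linearized operator $\tilde x$. First I would check that $\tilde a_0,\tilde a_1,\dots,\tilde a_n$ are self-adjoint — immediate blockwise, as the only nonzero blocks of $\tilde a_0$ are $a_0^*$ and $a_0$ in positions $(1,3)$ and $(3,1)$, and the only nonzero blocks of $\tilde a_i$ are $a_i^*$ and $a_i$ in positions $(2,3)$ and $(3,2)$. Then Theorem~\ref{thm:lehner} applies verbatim to $\tilde x=\tilde a_0+\sum_i\tilde a_i\otimes s_i$ and gives $\lambda_{\rm max}(\tilde x)=\inf_{\tilde z>0}\lambda_{\rm max}(f(\tilde z))$ over $\tilde z\in\mathbb{C}^{r\times r}_{\rm s.a.}$; combining with Lemma~\ref{lem:lin}, which identifies $\lambda_{\rm max}(\tilde x)$ with $\lambda_{\rm max}(xx^*+b\otimes\id)^{1/2}$, yields the first equality. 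For the second, I would invoke the ``moreover'' clause of Theorem~\ref{thm:lehner} to restrict the infimum to those $\tilde z>0$ with $f(\tilde z)=\lambda\id_r$ for some scalar $\lambda$ — then $\lambda_{\rm max}(f(\tilde z))=\lambda$ — and note that since the infimum equals $\lambda_{\rm max}(xx^*+b\otimes\id)^{1/2}>0$ (recall $b>0$ is assumed in this section) every such $\lambda$ is positive. There is nothing delicate here.

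Since this corollary is only a stepping stone, it is worth sketching how I would use it, i.e.\ how $\inf_{\tilde z>0}\lambda_{\rm max}(f(\tilde z))$ over $(m+2d)\times(m+2d)$ matrices collapses to the explicit $d\times d$ formula of Theorem~\ref{thm:main}; I would organize this as two successive eliminations of coordinate blocks. Set $g_\lambda(\tilde z):=\lambda\id_r-\tilde a_0-\sum_i\tilde a_i\tilde z\tilde a_i$, so that $f(\tilde z)\le\lambda\id_r$ is equivalent to $\tilde z^{-1}\le g_\lambda(\tilde z)$, and observe that $g_\lambda(\tilde z)$ depends on $\tilde z$ only through its compression $Z$ to the last two blocks (of sizes $m$ and $d$), because $\tilde a_1,\dots,\tilde a_n$ vanish on the first block. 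The first step, using the Schur complement formulas of Lemma~\ref{lem:schur}, would establish that for $\lambda>0$,
\[
	\exists\,\tilde z>0:\ f(\tilde z)\le\lambda\id_r
	\iff
	\exists\,Z>0:\ \tfrac{1}{\lambda}\hat b+Z^{-1}+\sum_{i=1}^n\hat a_i Z\hat a_i\le\lambda\id_{m+d},
\]
where $\hat a_i=\bigl(\begin{smallmatrix}0&a_i^*\\ a_i&0\end{smallmatrix}\bigr)$ and $\hat b=\bigl(\begin{smallmatrix}0&0\\ 0&b\end{smallmatrix}\bigr)$ (recall $b=a_0a_0^*$). For ``$\Rightarrow$'' take $Z$ to be the compression of $\tilde z$, using that compression preserves the Loewner order; for ``$\Leftarrow$'' take $\tilde z:=g_\lambda(Z)^{-1}$, combining the order-reversing monotonicity of $Z\mapsto g_\lambda(Z)$ with the Schur-complement computation that the last-two-block compression of $g_\lambda(Z)^{-1}$ equals $\bigl(\lambda\id_{m+d}-\sum_i\hat a_iZ\hat a_i-\tfrac1\lambda\hat b\bigr)^{-1}$, which the displayed inequality forces to be $\le Z$.

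The second step would remove the $m\times m$ block. Write $Z=\bigl(\begin{smallmatrix}z_2&z_{23}\\ z_{23}^*&z_3\end{smallmatrix}\bigr)$ with $z_2$ of size $m$, $z_3$ of size $d$, and abbreviate $M(z):=b+z^{-1}+\sum_i a_i(\id-\sum_j a_j^*za_j)^{-1}a_i^*$. For ``$\Leftarrow$'' just take $z_{23}=0$, $z_3=\lambda z$, $z_2=(\lambda\id_m-\sum_j a_j^*z_3a_j)^{-1}$; a one-line computation makes the reduced inequality block-diagonal with blocks $\lambda\id_m$ and $\lambda^{-1}M(z)$, hence equivalent to $M(z)\le\lambda^2\id_d$. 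For ``$\Rightarrow$'' read off $z:=\lambda^{-1}z_3$ and feed the Schur-complement bounds $z_3^{-1}\le(Z^{-1})_{33}$ and $(\lambda\id_m-\sum_j a_j^*z_3a_j)^{-1}\le z_2$ (consequences of $Z>0$ together with the $(2,2)$-block of the reduced inequality) into its $(3,3)$-block — which now carries the $\tfrac1\lambda\hat b$ term — to obtain $M(z)\le\lambda^2\id_d$. Chaining the two equivalences gives, for $\lambda>0$, that $\exists\,\tilde z>0:f(\tilde z)\le\lambda\id_r$ iff there is an admissible $z$ (meaning $z>0$ and $\sum_j a_j^*za_j<\id$) with $\lambda_{\rm max}(M(z))\le\lambda^2$; taking the infimum over $\lambda$ on both sides and applying Corollary~\ref{cor:upperlehner} then gives $\lambda_{\rm max}(xx^*+b\otimes\id)=\inf_z\lambda_{\rm max}(M(z))$, the first formula of Theorem~\ref{thm:main}.

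The main obstacle, I expect, is the two ``$\Rightarrow$'' implications, and inside them the recovery of the term $b$: extracting a $d\times d$ matrix $z$ directly from a merely feasible (not optimal, not block-diagonal) $\tilde z$ reproduces $z^{-1}$ and $\sum_i a_i(\cdots)^{-1}a_i^*$ but misses $b$. The resolution is the order of the two eliminations — performing the outer $d\times d$ elimination first moves the $a_0a_0^*$-coupling of $\tilde a_0$ into the $(3,3)$-entry of the reduced inequality, where it resurfaces as exactly the missing $b$. Settling on this order, and on the substitutions $z_3=\lambda z$, $z_2=(\lambda\id_m-\sum_j a_j^*z_3a_j)^{-1}$, $z=\lambda^{-1}z_3$, is the only genuinely creative step; everything else is Schur-complement bookkeeping.
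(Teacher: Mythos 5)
Your first paragraph is exactly the paper's (implicit) proof: the corollary follows by applying Theorem~\ref{thm:lehner} to the self-adjoint linearization $\tilde x$ of Lemma~\ref{lem:lin}, with the ``moreover'' clause giving the restriction to $f(\tilde z)=\lambda\id$ and positivity of the left-hand side (since $b>0$ is assumed) forcing $\lambda>0$, so the argument is correct and essentially identical. Your remaining paragraphs concern the further reduction to the explicit formula of Theorem~\ref{thm:main} (the content of Lemmas~\ref{lem:schurred}--\ref{lem:valred} and Proposition~\ref{prop:mainupper}), not the stated corollary, so they are not assessed here.
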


Next, we identify some consequences of
the identity $f(\tilde z)=\lambda\id$.

\begin{lem}
\label{lem:schurred}
Write $\tilde z\in\mathbb{C}^{r\times r}_{\rm s.a.}$ in the
block decomposition of Lemma \ref{lem:lin} as
$$
	\tilde z = 
	\begin{bmatrix}
		p & v_1 & v_2 \\
		v_1^* & q & w \\
		v_2^* & w^* & z
	\end{bmatrix}.
$$
Let $\lambda>0$, and suppose that $\tilde z>0$ and $f(\tilde z)=\lambda\id$.
Then
$$
	z>0,\qquad
	\sum_{j=1}^n a_j^*za_j < \lambda\id,\qquad
	\lambda^{-1}b + z^{-1} 
	+\sum_{i=1}^n a_i\Bigg(\lambda\id -
	\sum_{j=1}^n a_j^*za_j\Bigg)^{-1}a_i^*
	\le \lambda\id.
$$
\end{lem}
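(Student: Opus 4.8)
The plan is to expand the identity $f(\tilde z)=\lambda\id$ into its nine blocks and extract only the information needed. First I would compute $f(\tilde z)=\tilde a_0+\tilde z^{-1}+\sum_i\tilde a_i\tilde z\tilde a_i$ block by block. The term $\sum_i\tilde a_i\tilde z\tilde a_i$ is easy because each $\tilde a_i$ only couples the second and third block coordinates: a direct multiplication gives $\tilde a_i\tilde z\tilde a_i=\begin{bmatrix}0&0&0\\0&a_i^*za_i&0\\0&0&a_iqa_i^*\end{bmatrix}$, so summing over $i$ the bottom-right block of $\sum_i\tilde a_i\tilde z\tilde a_i$ is $\sum_i a_iqa_i^*$ and its middle block is $\sum_i a_i^*za_i$. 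Likewise $\tilde a_0$ contributes only to the $(1,3)$ and $(3,1)$ blocks. The hard work is in $\tilde z^{-1}$, which I would handle via the Schur complement formula (Lemma \ref{lem:schur}): group the first two block-coordinates together as one $(m+d)\times(m+d)$ block and the last one (the $z$-block) as the other, or, more conveniently here, view the $(3,3)$ entry $z$ as the ``$D$'' block and the $2\times 2$ upper-left block $\begin{bmatrix}p&v_1\\v_1^*&q\end{bmatrix}$ as the ``$A$'' block. Positivity of $\tilde z$ already forces $z>0$ (a principal submatrix of a positive matrix is positive), giving the first claim.

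Next, reading off the $(3,3)$ block of $f(\tilde z)=\lambda\id$: the $\tilde a_0$ term contributes $0$ there, the $\sum_i\tilde a_i\tilde z\tilde a_i$ term contributes $\sum_i a_iqa_i^*$, and $\tilde z^{-1}$ contributes its own $(3,3)$ block, which by Lemma \ref{lem:schur} equals $(\,z-[v_2^*\ w^*]A^{-1}[v_2\ w^*]^*)^{-1}$ where $A$ is the upper-left $2\times 2$ block of $\tilde z$; call this quantity $(z-h)^{-1}$ with $h\ge 0$. So $(z-h)^{-1}+\sum_i a_iqa_i^*=\lambda\id$, hence $\sum_i a_iqa_i^*\le\lambda\id$ and also $(z-h)^{-1}\le\lambda\id$, i.e.\ $z-h\ge\lambda^{-1}\id$. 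Now I turn to the middle $(2,2)$ block: there $\tilde a_0$ gives $0$, $\sum_i\tilde a_i\tilde z\tilde a_i$ gives $\sum_i a_i^*za_i$, and $\tilde z^{-1}$ gives some positive-semidefinite block (again a principal block of the positive matrix $\tilde z^{-1}$), so $\sum_i a_i^*za_i\le\lambda\id$; to upgrade $\le$ to $<$ I would argue that the $(2,2)$ block of $\tilde z^{-1}$ is strictly positive, which follows because $\tilde z>0$ implies $\tilde z^{-1}>0$ and hence every principal block of $\tilde z^{-1}$ is $>0$. That gives the second claim, $\sum_j a_j^*za_j<\lambda\id$.

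For the third (and main) claim I need to combine the two block identities. The natural route is the matrix inversion lemma (Lemma \ref{lem:woodbury}) or a second Schur complement: from the $(2,2)$-block relation I get an expression for $q$ in terms of $z,\lambda$ and the off-diagonal data, and I want to feed it into the $(3,3)$-block relation $(z-h)^{-1}=\lambda\id-\sum_i a_iqa_i^*$. Specifically, the $(3,3)$ block of $\tilde z^{-1}$ can alternatively be computed by taking $q$ (the $(2,2)$ entry of $\tilde z$) as the Schur pivot, or by writing the middle block-relation as $q=\,(\text{something involving }z)$; the cleanest version is to use that the $(2,2)$ and $(3,3)$ blocks of $\tilde z^{-1}$, restricted to the $2\times 2$ subsystem in coordinates $2,3$ (after first Schur-complementing out coordinate $1$, which only shrinks things and preserves the inequalities), satisfy a small $2\times 2$ block inversion to which Lemma \ref{lem:woodbury} applies directly. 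Carrying this through, the $(3,3)$-relation becomes
$$
\lambda\id-b'-z^{-1}-\sum_i a_i\Bigl(\lambda\id-\sum_j a_j^*za_j\Bigr)^{-1}a_i^*\ \ge\ 0,
$$
where $b'$ collects the contribution of the first block coordinate (involving $a_0$, $v_1$, $v_2$, $w$); using $b=a_0a_0^*$ and the $(1,\cdot)$ block identities one checks $b'\ge\lambda^{-1}b$, which yields exactly the stated inequality after rearranging. \textbf{The main obstacle} I anticipate is bookkeeping the cross terms from the first block coordinate cleanly: one must verify that eliminating coordinate $1$ (via its own Schur complement) both preserves all the inequalities and produces precisely the term $\lambda^{-1}b$ rather than something larger, which is where the choice $b=a_0a_0^*$ and the off-diagonal blocks $v_1,v_2,w$ of $\tilde z$ have to be tracked carefully.
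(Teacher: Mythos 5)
Your overall route (expand $f(\tilde z)=\lambda\id$ block by block and eliminate the auxiliary blocks with Schur complements) is the same as the paper's, and your treatment of the first two claims is fine: $z>0$ because it is a principal block of $\tilde z>0$, and $\lambda\id-\sum_j a_j^*za_j=(\tilde z^{-1})_{22}>0$ from the $(2,2)$ block equation. One computational slip before that: $\tilde a_i\tilde z\tilde a_i$ is \emph{not} block diagonal; its $(2,3)$ and $(3,2)$ blocks are $a_i^*w^*a_i^*$ and $a_iwa_i$. This happens to be harmless because you only use the diagonal and first-row blocks of $f(\tilde z)=\lambda\id$, but as stated the ``direct multiplication'' is wrong.

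The genuine gap is the third inequality, which is the whole point of the lemma. You reduce it to the unverified assertion $b'\ge\lambda^{-1}b$ and yourself flag this as the unresolved main obstacle; worse, the heuristic you offer for handling coordinate $1$ --- that Schur-complementing it out ``only shrinks things and preserves the inequalities'' --- points in the wrong direction: discarding the coordinate-$1$ cross contribution gives only $(\tilde z^{-1})_{33}\ge z^{-1}$ and loses the $\lambda^{-1}b$ term entirely, so the $b$-dependence that Theorem~\ref{thm:main} needs would never appear. The $(1,\cdot)$ block equations must be used \emph{quantitatively}: they say $(\tilde z^{-1})_{11}=\lambda\id$, $(\tilde z^{-1})_{12}=0$, $(\tilde z^{-1})_{13}=-a_0^*$, and then Lemma~\ref{lem:schur} applied to the positive matrix $\tilde z^{-1}$ (with the first two block coordinates as the pivot) gives $z^{-1}=(\tilde z^{-1})_{33}-\lambda^{-1}a_0a_0^*-(\tilde z^{-1})_{32}\bigl((\tilde z^{-1})_{22}\bigr)^{-1}(\tilde z^{-1})_{23}$, hence $(\tilde z^{-1})_{33}\ge z^{-1}+\lambda^{-1}b$. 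You also need, and do not quite state, the bridge $q\ge\bigl((\tilde z^{-1})_{22}\bigr)^{-1}=\bigl(\lambda\id-\sum_j a_j^*za_j\bigr)^{-1}$ (again Schur, since $\tilde z>0$). With these two bounds, the $(3,3)$ block equation $(\tilde z^{-1})_{33}+\sum_i a_iqa_i^*=\lambda\id$ yields exactly the stated inequality. The paper obtains the same two ingredients by two successive Schur complements of $\tilde z$ itself, computing the coordinate-$1$ cross term exactly as $\lambda^{-1}\operatorname{diag}(0,b)$ using $b=a_0a_0^*$; in either form, this exact evaluation is the step your sketch is missing.
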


\begin{proof}
We begin by noting that
$$
	f(\tilde z) = 
	\begin{bmatrix}
	0 & 0 & a_0^* \\
	0 & 0 & 0 \\
	a_0 & 0 & 0
	\end{bmatrix}
	+ \tilde z^{-1} +
	\sum_{i=1}^n
	\begin{bmatrix}
	0 & 0 & 0 \\
	0 & a_i^*za_i & a_i^*w^*a_i^* \\
	0 & a_iwa_i & a_iqa_i^*
	\end{bmatrix}.
$$
To compute $\tilde z^{-1}$, we express
$$
	\tilde z = \begin{bmatrix}
	p & V \\ V^* & Z
	\end{bmatrix},
	\qquad
	Z = \begin{bmatrix}
	 q & w \\ w^* & z \end{bmatrix},\qquad
	V = \begin{bmatrix} v_1 & v_2 \end{bmatrix}
$$
and apply Lemma \ref{lem:schur}. Using that $f(\tilde z)=\lambda\id$ 
yields
\begin{align*}
&	(\tilde z/Z)^{-1} = \lambda\id,
\\
&
	\begin{bmatrix} 0 & a_0^*\end{bmatrix} 
	-(\tilde z/Z)^{-1}VZ^{-1} = 0,
\\
&
	Z^{-1} + Z^{-1}V^*(\tilde z/Z)^{-1}VZ^{-1} +
	\sum_{i=1}^n
	\begin{bmatrix} a_i^*za_i & a_i^*w^*a_i^* \\
	a_iwa_i & a_iqa_i^* \end{bmatrix}
	= \lambda\id.
\end{align*}
Plugging the first two equations into the last equation yields
$$
	Z^{-1} + \frac{1}{\lambda}
	\begin{bmatrix} 0 & 0 \\ 0 & b
	\end{bmatrix}
	+
	\sum_{i=1}^n
	\begin{bmatrix} a_i^*za_i & a_i^*w^*a_i^* \\
	a_iwa_i & a_iqa_i^* \end{bmatrix}
	= \lambda\id,
$$
where we used that $b=a_0a_0^*$. We note that the above equations are all 
well defined, as $\tilde z>0$ ensures that $Z>0$ and $\tilde z/Z>0$ by
Lemma \ref{lem:schur}.

Next, we compute $Z^{-1}$ using Lemma \ref{lem:schur} and plug the 
resulting expression into the last equation display. Then we obtain
\begin{align}
\label{eq:firstsch}
&	(Z/z)^{-1} + \sum_{i=1}^n a_i^*za_i = \lambda\id,
\\
\label{eq:secondsch}
&	z^{-1} + z^{-1}w^*(Z/z)^{-1}wz^{-1} +
	\lambda^{-1}b +
	\sum_{i=1}^n a_iqa_i^* = \lambda\id.
\end{align}
Moreover, as $Z>0$, we have $z>0$ and $Z/z>0$. This
yields the desired inequality 
$\lambda\id - \sum_i a_i^*za_i = (Z/z)^{-1} > 0$ by \eqref{eq:firstsch}.
Finally, by \eqref{eq:firstsch} and the definition of $Z/z$,
$$
	q \ge 
	q - wz^{-1}w^* 
	= Z/z = 
	\Bigg(
	\lambda\id-
	\sum_{i=1}^n a_i^*za_i
	\Bigg)^{-1},
$$
and thus the last inequality in the statement follows from 
\eqref{eq:secondsch}.
\end{proof}

The idea of the proof is now to show that given any $\tilde z$ as in 
Lemma \ref{lem:schurred}, we can find a matrix with a smaller
value for the variational principle of Corollary \ref{cor:upperlehner}.

\begin{lem}
\label{lem:valred}
Let $\lambda>0$ and
$z\in\mathbb{C}^{d\times d}_{\rm s.a.}$ such that
$z>0$ and $\sum_j a_j^*za_j<\lambda\id$. Define
$$
	g(\lambda;z) =
	\begin{bmatrix}
	\lambda^{-1}\id + \lambda^{-2}a_0^*za_0
	& 0 & \lambda^{-1}a_0^*z \\
	0 & \big(\lambda\id - \sum_j a_j^*za_j\big)^{-1} & 0 \\
	\lambda^{-1}za_0 & 0 & z
	\end{bmatrix}.
$$
Then $g(\lambda;z)>0$ and 
$$
	f(g(\lambda;z)) =
	\begin{bmatrix}
	\lambda\id & 0 & 0 \\
	0 & \lambda\id  & 0 \\
	0 & 0 & 
	\lambda^{-1}b + z^{-1} +
	\sum_i a_i \big(\lambda\id - \sum_j a_j^*za_j\big)^{-1} a_i^*
	\end{bmatrix}.
$$
In particular, if $\tilde z$ is as in Lemma \ref{lem:schurred}, then
$f(g(\lambda;z))\le f(\tilde z)$.
\end{lem}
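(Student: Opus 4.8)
The plan is to verify the three assertions by direct block-matrix computation, exploiting two structural features of $g(\lambda;z)$: it is block-diagonal with respect to the splitting of the $r=m+2d$ coordinates into the middle ($m$-dimensional) block and its complement, and the lower-right ($2d$-dimensional) block has been arranged so that the Schur complement of its $z$-corner collapses to a multiple of the identity.

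For positivity, I would note that the middle block $\big(\lambda\id-\sum_j a_j^*za_j\big)^{-1}$ is positive by the standing hypothesis $\sum_j a_j^*za_j<\lambda\id$, while the $2d\times2d$ block
\[
\begin{bmatrix}
\lambda^{-1}\id+\lambda^{-2}a_0^*za_0 & \lambda^{-1}a_0^*z\\
\lambda^{-1}za_0 & z
\end{bmatrix}
\]
has $z>0$ in its lower-right corner and Schur complement exactly $\lambda^{-1}\id>0$ relative to that corner, hence is positive by Lemma~\ref{lem:schur}; thus $g(\lambda;z)>0$. To obtain the formula for $f(g(\lambda;z))$ I would compute $g(\lambda;z)^{-1}$ and $\sum_i\tilde a_ig(\lambda;z)\tilde a_i$ separately. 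By block-diagonality the inverse has middle block $\lambda\id-\sum_j a_j^*za_j$; for the $2d\times2d$ block I would apply the inversion formula of Lemma~\ref{lem:schur}, using $(M/D)^{-1}=\lambda\id$ and $b=a_0a_0^*$ to read off the off-diagonal entries $-a_0^*$, $-a_0$ and the corner $z^{-1}+\lambda^{-1}b$. For $\sum_i\tilde a_ig(\lambda;z)\tilde a_i$ I would substitute $g(\lambda;z)$ into the identity for $\sum_i\tilde a_i\tilde z\tilde a_i$ recorded in the proof of Lemma~\ref{lem:schurred}; since the off-diagonal block $w$ of $g(\lambda;z)$ vanishes, the result is block-diagonal with entries $0$, $\sum_ia_i^*za_i$, and $\sum_ia_i\big(\lambda\id-\sum_ja_j^*za_j\big)^{-1}a_i^*$. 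Adding $\tilde a_0$, $g(\lambda;z)^{-1}$, and $\sum_i\tilde a_ig(\lambda;z)\tilde a_i$, the off-diagonal blocks cancel ($a_0^*-a_0^*$ and $a_0-a_0$), the middle block becomes $\lambda\id-\sum_ja_j^*za_j+\sum_ia_i^*za_i=\lambda\id$, and the corner is $\lambda^{-1}b+z^{-1}+\sum_ia_i\big(\lambda\id-\sum_ja_j^*za_j\big)^{-1}a_i^*$, as claimed.

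For the last assertion, if $\tilde z$ is as in Lemma~\ref{lem:schurred} then $f(\tilde z)=\lambda\id$ by hypothesis, whereas $f(g(\lambda;z))$ is block-diagonal with first two blocks $\lambda\id$ and corner bounded above by $\lambda\id$ by the third inequality of Lemma~\ref{lem:schurred}; hence $f(g(\lambda;z))\le\lambda\id=f(\tilde z)$. I do not expect a genuine obstacle here: the proof is pure block-matrix bookkeeping, and the only point deserving attention is recognizing that the specific shape of $g(\lambda;z)$ is precisely what forces the $2d$-block Schur complement to equal $\lambda^{-1}\id$, which in turn produces the off-diagonal entry $-a_0^*$ in $g(\lambda;z)^{-1}$ that cancels $\tilde a_0$ and leaves the clean block-diagonal form of $f(g(\lambda;z))$.
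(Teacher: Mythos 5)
Your proof is correct and follows essentially the same route as the paper: verify $g(\lambda;z)>0$ via a Schur complement that collapses to $\lambda^{-1}\id$, invert the relevant block with Lemma~\ref{lem:schur} (using $b=a_0a_0^*$) so that the off-diagonal terms cancel $\tilde a_0$, and conclude $f(g(\lambda;z))\le\lambda\id=f(\tilde z)$ from the last inequality of Lemma~\ref{lem:schurred}. The only difference is cosmetic bookkeeping — you split off the middle block by block-diagonality and take the Schur complement of the remaining $2d\times 2d$ block relative to its $z$-corner, whereas the paper takes the complement relative to the full lower-right $(m+d)\times(m+d)$ block $Y$ — and both yield the same $\lambda^{-1}\id$.
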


\begin{proof}
Express $\hat z=g(\lambda;z)$ in block form as
$$
	\hat z = \begin{bmatrix}
	\lambda^{-1}\id + \lambda^{-2}a_0^*za_0 & W \\ W^* & Y
	\end{bmatrix}
$$
with
$$
	Y = \begin{bmatrix}
	 \big(\lambda\id - \sum_j a_j^*za_j\big)^{-1} & 0 \\ 0 & z 
	\end{bmatrix},\qquad
	W = \begin{bmatrix} 0 & \lambda^{-1}a_0^*z \end{bmatrix}.
$$
Then $Y>0$ and $\hat z/Y = \lambda^{-1}\id>0$, so that $\hat z>0$
by Lemma \ref{lem:schur}.

We can further use Lemma \ref{lem:schur} to compute 
$$
	\hat z^{-1} =
	\begin{bmatrix}
	\lambda\id & 0 & -a_0^* \\
	0 & \lambda\id - \sum_j a_j^*za_j & 0 \\
	-a_0 & 0 & \lambda^{-1}b + z^{-1}
	\end{bmatrix}.
$$
The identity for $f(\hat z)$ now follows readily. Finally, if
$\tilde z$ is as in Lemma \ref{lem:schurred}, we obtain
$f(\hat z)\le \lambda\id = f(\tilde z)$ using the last
inequality of Lemma \ref{lem:schurred}.
\end{proof}

We can now prove the first part of Theorem \ref{thm:main}.

\begin{prop}
\label{prop:mainupper}
For $x$ as in \eqref{eq:mainx}, we have
$$
	\lambda_{\rm max}(xx^* + b\otimes\id) =
	\inf_{\substack{z>0 \\ \sum_j \! a_j^*za_j<\id}}
	\lambda_{\rm max}\Bigg(
	b + z^{-1} +
	\sum_{i=1}^n a_i \Bigg(
	\id - \sum_{j=1}^n a_j^*z a_j
	\Bigg)^{-1} a_i^*
	\Bigg).
$$
\end{prop}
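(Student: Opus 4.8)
The plan is to obtain Proposition~\ref{prop:mainupper} by combining Corollary~\ref{cor:upperlehner} with Lemmas~\ref{lem:schurred} and~\ref{lem:valred}, the only genuinely new point being that the auxiliary scalar $\lambda$ appearing in those two lemmas can be eliminated by rescaling $z$. Throughout I use the standing assumption $b>0$ of this section. Write
$$
	H(z) = b + z^{-1} + \sum_{i=1}^n a_i\Bigg(\id - \sum_{j=1}^n a_j^*za_j\Bigg)^{-1}a_i^*
$$
for the matrix inside $\lambda_{\rm max}$ in the statement, defined for $z>0$ with $\sum_j a_j^*za_j<\id$, and let $h_\lambda(z)=\lambda^{-1}b + z^{-1} + \sum_i a_i(\lambda\id-\sum_j a_j^*za_j)^{-1}a_i^*$ be the matrix occurring in Lemmas~\ref{lem:schurred}--\ref{lem:valred}. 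Since $b>0$ we have $H(z)>0$; moreover, because $(\lambda z)^{-1}=\lambda^{-1}z^{-1}$ and $\lambda\id-\sum_j a_j^*(\lambda z)a_j=\lambda(\id-\sum_j a_j^*za_j)$, one gets the scaling identity $h_\lambda(\lambda z)=\lambda^{-1}H(z)$ whenever $\lambda>0$, $z>0$, $\sum_j a_j^*za_j<\id$, and the map $z\mapsto\lambda z$ carries $\{z>0:\sum_j a_j^*za_j<\id\}$ bijectively onto $\{z>0:\sum_j a_j^*za_j<\lambda\id\}$. Put $L=\lambda_{\rm max}(xx^*+b\otimes\id)$ and let $R$ be the right-hand side of the proposition.

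For the inequality $L\le R$, fix any admissible $z$ (that is, $z>0$ and $\sum_j a_j^*za_j<\id$), set $\lambda=\lambda_{\rm max}(H(z))^{1/2}>0$ and $w=\lambda z$. Then $w>0$ and $\sum_j a_j^*wa_j<\lambda\id$, so Lemma~\ref{lem:valred} gives $g(\lambda;w)>0$ and shows that $f(g(\lambda;w))$ is block diagonal with diagonal blocks $\lambda\id$, $\lambda\id$, and $h_\lambda(w)=\lambda^{-1}H(z)$; the last block has largest eigenvalue $\lambda^{-1}\lambda_{\rm max}(H(z))=\lambda$, whence $\lambda_{\rm max}(f(g(\lambda;w)))=\lambda$. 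Corollary~\ref{cor:upperlehner} then yields $L^{1/2}\le\lambda_{\rm max}(f(g(\lambda;w)))=\lambda=\lambda_{\rm max}(H(z))^{1/2}$, and taking the infimum over admissible $z$ gives $L\le R$.

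For the reverse inequality $R\le L$, I would use the second expression for $L^{1/2}$ in Corollary~\ref{cor:upperlehner}: since $f(\tilde z)=\lambda\id$ forces $\lambda_{\rm max}(f(\tilde z))=\lambda$, for every $\varepsilon>0$ there are $\lambda>0$ and $\tilde z>0$ with $f(\tilde z)=\lambda\id$ and $\lambda<L^{1/2}+\varepsilon$. Lemma~\ref{lem:schurred} applied to this $\tilde z$ produces its lower-right block $z$ with $z>0$, $\sum_j a_j^*za_j<\lambda\id$, and $h_\lambda(z)\le\lambda\id$. Setting $w=\lambda^{-1}z$, we have $w>0$, $\sum_j a_j^*wa_j<\id$, and by the scaling identity $\lambda^{-1}H(w)=h_\lambda(z)\le\lambda\id$, so $\lambda_{\rm max}(H(w))\le\lambda^2$. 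Hence $R\le\lambda_{\rm max}(H(w))\le(L^{1/2}+\varepsilon)^2$, and letting $\varepsilon\downarrow 0$ gives $R\le L$.

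The step I expect to demand the most care is the bookkeeping around the rescaling rather than any single estimate: one must verify that $z\mapsto\lambda z$ transforms the constraint $\sum_j a_j^*za_j<\lambda\id$ and the inverse $(\lambda\id-\sum_j a_j^*za_j)^{-1}$ precisely so that $h_\lambda$ and $H$ line up, and one must keep track of the invertibility of all the Schur complements and matrix inverses called for in Lemmas~\ref{lem:schurred} and~\ref{lem:valred}, which is guaranteed by the strict positivity of $\tilde z$ and of $g(\lambda;w)$. No compactness or continuity argument is needed beyond what is already packaged inside Corollary~\ref{cor:upperlehner}.
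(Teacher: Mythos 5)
Your proposal is correct and follows essentially the same route as the paper: Corollary~\ref{cor:upperlehner} combined with Lemmas~\ref{lem:schurred} and~\ref{lem:valred}, with $\lambda$ eliminated via the rescaling $z\leftarrow\lambda z$. The only cosmetic difference is that you argue the two inequalities separately, optimizing $\lambda=\lambda_{\rm max}(H(z))^{1/2}$ explicitly and running an $\varepsilon$-argument, whereas the paper sandwiches the infima and invokes $\inf_{\lambda>0}\max\{\lambda,\lambda^{-1}c\}=c^{1/2}$; the content is the same.
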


\begin{proof}
Corollary \ref{cor:upperlehner} and Lemma \ref{lem:valred} yield
\begin{align*}
	\lambda_{\rm max}(xx^*+b\otimes\id)^{1/2} &=
	\inf_{\lambda>0} \inf_{\substack{\tilde z>0\\ f(\tilde z)=\lambda\id}}
	\lambda_{\rm max}(f(\tilde z))
\\
&	\ge
	\inf_{\lambda>0}
	\inf_{\substack{z>0 \\ \sum_j \! a_j^*za_j<\lambda\id}}
	\lambda_{\rm max}(f(g(\lambda;z))) 
\\
&	\ge
	\inf_{\tilde z>0} \lambda_{\rm max}(f(\tilde z))
	=
	\lambda_{\rm max}(xx^*+b\otimes\id)^{1/2}.
\end{align*}
Therefore, the expression for $f(g(\lambda;z))$ in
Lemma \ref{lem:valred} yields
\begin{multline*}
	\lambda_{\rm max}(xx^*+b\otimes\id)^{1/2}  =
	\inf_{\lambda>0}
	\inf_{\substack{z>0 \\ \sum_j \! a_j^*za_j<\lambda\id}}
	\lambda_{\rm max}(f(g(\lambda;z))) = \\
	\inf_{\lambda>0}
	\inf_{\substack{z>0 \\ \sum_j \! a_j^*za_j<\lambda\id}}
	\max\Bigg\{
	\lambda,~
	\lambda_{\rm max}\Bigg(
	\lambda^{-1}b + z^{-1} +
	\sum_i a_i \Bigg(\lambda\id - \sum_{j=1}^n 
	a_j^*za_j\Bigg)^{-1} a_i^*
	\Bigg)
	\Bigg\}.
\end{multline*}
The conclusion follows readily by replacing
$z\leftarrow \lambda z$ in the inner infimum and using that
$\inf_{\lambda>0}
\max\{\lambda,\lambda^{-1}c\}=c^{1/2}$ for $c\ge 0$.
\end{proof}

\section{The lower edge}
\label{sec:lower}

The aim of this section is to prove the second variational formula of 
Theorem \ref{thm:main}. The basic structure of the proof is similar to 
that of the first variational formula. However, the complication that 
arises in the present case is that we must rely on a more involved 
linearization argument that appears in \cite[\S 5.2]{Leh99}. 

To this end, define the operator
$$
	L = \begin{bmatrix}
	\id\otimes p_\Omega & \id\otimes l_1^* &
	\id\otimes l_1 & \cdots & \id\otimes l_n^* & \id\otimes l_n
	\end{bmatrix}
$$
in $\mathbb{C}^{1\times (2n+1)}\otimes
\mathbb{C}^{d\times d}\otimes B(\mathcal{F}(\mathbb{C}^n))$, and
define the matrix
$$
	T = 
	\begin{bmatrix}
	\frac{a_0a_0^*}{n+1} & 0 & \cdots & 0 \\
	0 &
	\frac{a_0a_0^*}{n+1}-a_1a_1^*
	& & -a_1a_n^* \\
	0 & -a_1a_1^*
	& & -a_1a_n^* \\
	\vdots &&\ddots & \vdots \\
	0 & -a_na_1^* && -a_na_n^* \\
	0 & -a_na_1^* &\cdots&
	\frac{a_0a_0^*}{n+1}-a_na_n^*
	\end{bmatrix}
$$
in $\mathbb{C}^{(2n+1)\times(2n+1)}\otimes \mathbb{C}^{d\times d}$,
where $a_0\in\mathbb{C}^{d\times d}$ will be chosen shortly. Then
\eqref{eq:fockid} yields
$$
	L(T\otimes\id)L^* = a_0a_0^*\otimes\id - xx^*.
$$
Now recall, as was explained at the beginning of \S\ref{sec:upper},
that the variational formula that we aim to prove is unchanged if
we replace $b$ by $b+c\id$ for any $c\in\mathbb{R}$. By choosing
$c$ to be sufficiently negative, we may assume without loss of 
generality that we can write $-b=a_0a_0^*>0$ for some
$a_0\in\mathbb{C}^{d\times d}$, and that moreover $T$ is positive
definite. This is assumed henceforth throughout this section.

Since $T$ is positive definite, it can be expressed as $T=RR^*$
where
$$
	R = 
	\begin{bmatrix}
	\frac{a_0}{\sqrt{n+1}} & 0 \\
	0 & B
	\end{bmatrix},
\qquad\qquad
	B = \begin{bmatrix}
	M_1 \\ N_1 \\ \vdots \\ M_n \\ N_n
	\end{bmatrix}
$$
for some matrix $B\in\mathbb{C}^{2nd\times 2nd}$ that we expressed
in block form with blocks $M_i,N_i\in\mathbb{C}^{d\times 2nd}$.
Then we obtain the following analogue of Corollary 
\ref{cor:upperlehner}.

\begin{lem}
\label{lem:lowerlehner}
Let $s=2(n+1)d$, define the $s\times s$ matrices
$$
	A_0 = 
	\frac{1}{\sqrt{n+1}}
	\begin{bmatrix}
	0 & 0 & a_0^* \\
	0 & 0 & 0 \\
	a_0 & 0 & 0
	\end{bmatrix},\qquad\quad
	A_i = \begin{bmatrix}
	0 & 0 & 0\\
	0 & 0 & M_i^* \\
	0 & N_i & 0
	\end{bmatrix}
$$
for $i=1,\ldots,n$, and define
$$
	F(Z) = 
	Z^{-1} + \sum_{i=1}^n A_i^* Z A_i.
$$
Then
$$
	\big({-\lambda_{\rm min}(xx^*+b\otimes\id)}\big)^{1/2} =
	\inf_{\substack{Z>0 \\ Z^{-1}>A_0}}
	\lambda_{\rm max}(F(Z)) =
	\inf_{\lambda>0}
	\inf_{\substack{Z>0 \\ Z^{-1}>A_0 \\ F(Z)=\lambda\id}}
	\lambda_{\rm max}(F(Z)).
$$
\end{lem}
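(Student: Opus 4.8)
The strategy is to identify $-\lambda_{\rm min}(xx^*+b\otimes\id)$ with the square of $\lambda_{\rm max}$ of a self-adjoint dilation operator, to recognize that dilation as a linear combination of the operators $p_\Omega,l_i,l_i^*$ with matrix coefficients, and then to apply Theorem~\ref{thm:lehnerfock}.

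First I would use $-b=a_0a_0^*$ together with the identity $L(T\otimes\id)L^*=a_0a_0^*\otimes\id-xx^*$ to write
$$
	-\lambda_{\rm min}(xx^*+b\otimes\id)
	=\lambda_{\rm max}\big(a_0a_0^*\otimes\id-xx^*\big)
	=\lambda_{\rm max}\big(L(T\otimes\id)L^*\big).
$$
Since $T=RR^*$, we have $L(T\otimes\id)L^*=YY^*$ where $Y:=L(R\otimes\id)$ is a bounded operator from $H_1:=\mathbb{C}^{(2n+1)d}\otimes\mathcal{F}(\mathbb{C}^n)$ to $H_2:=\mathbb{C}^d\otimes\mathcal{F}(\mathbb{C}^n)$. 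Applying Lemma~\ref{lem:dilation} to $Y$ and using that $\spc((YY^*)^{1/2})\subseteq[0,\infty)$, we obtain
$$
	\lambda_{\rm max}(\tilde Y)=\lambda_{\rm max}(YY^*)^{1/2}
	=\big({-\lambda_{\rm min}(xx^*+b\otimes\id)}\big)^{1/2},
	\qquad
	\tilde Y:=\begin{bmatrix}0 & Y^* \\ Y & 0\end{bmatrix}.
$$

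The key step is to compute $\tilde Y$ in block form. Writing $L=\begin{bmatrix}\id\otimes p_\Omega & \Lambda\end{bmatrix}$ with $\Lambda=\begin{bmatrix}\id\otimes l_1^* & \id\otimes l_1 & \cdots & \id\otimes l_n^* & \id\otimes l_n\end{bmatrix}$ and using the block structure of $R$, one finds $Y=\begin{bmatrix}\frac{a_0}{\sqrt{n+1}}\otimes p_\Omega & \Lambda(B\otimes\id)\end{bmatrix}$, and since $B$ has block rows $M_1,N_1,\ldots,M_n,N_n$ one computes $\Lambda(B\otimes\id)=\sum_{i=1}^n\big(M_i\otimes l_i^*+N_i\otimes l_i\big)$. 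Decomposing $H_1=(\mathbb{C}^d\otimes\mathcal{F}(\mathbb{C}^n))\oplus(\mathbb{C}^{2nd}\otimes\mathcal{F}(\mathbb{C}^n))$ according to the block structure of $R$, so that $\tilde Y$ acts on $\mathbb{C}^s\otimes\mathcal{F}(\mathbb{C}^n)$ with $s=2(n+1)d$, a direct matching of the coefficients of $p_\Omega$, $l_i$, and $l_i^*$ in the three resulting blocks (of sizes $d$, $2nd$, $d$) shows that
$$
	\tilde Y = A_0\otimes p_\Omega + \sum_{i=1}^n\big(A_i\otimes l_i + A_i^*\otimes l_i^*\big),
$$
with $A_0,A_1,\ldots,A_n$ exactly the matrices defined in the statement. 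This bookkeeping --- tracking the block positions and the distinct roles of $M_i$ and $N_i$ --- is the only real obstacle in the proof; everything else is a direct application of results already in hand.

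Finally, I would apply Theorem~\ref{thm:lehnerfock} to $\tilde Y$, reading off the coefficient of $\id$ as $0$, the coefficient of $p_\Omega$ as $A_0$, and the coefficient of $l_i$ as $A_i$. This yields
$$
	\lambda_{\rm max}(\tilde Y)
	=\inf_{\substack{Z>0\\ Z^{-1}>A_0}}\lambda_{\rm max}\Big(Z^{-1}+\sum_{i=1}^n A_i^*ZA_i\Big)
	=\inf_{\substack{Z>0\\ Z^{-1}>A_0}}\lambda_{\rm max}(F(Z)),
$$
where $Z$ ranges over $\mathbb{C}^{s\times s}_{\rm s.a.}$, and the last assertion of Theorem~\ref{thm:lehnerfock} permits restricting the infimum to those $Z$ for which $F(Z)$ is a multiple of the identity. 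Any such $Z>0$ satisfies $F(Z)=Z^{-1}+\sum_iA_i^*ZA_i>0$, so the multiple is positive; this gives exactly the inner infimum over $\{Z>0,\ Z^{-1}>A_0,\ F(Z)=\lambda\id\}$ with $\lambda>0$. Combining the three displays above proves the lemma.
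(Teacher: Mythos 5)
Your proposal is correct and follows essentially the same route as the paper: identify $A_0\otimes p_\Omega+\sum_i(A_i\otimes l_i+A_i^*\otimes l_i^*)$ as the self-adjoint dilation of $L(R\otimes\id)$, use Lemma~\ref{lem:dilation} together with $L(T\otimes\id)L^*=a_0a_0^*\otimes\id-xx^*=-(xx^*+b\otimes\id)+$ nothing more, and then invoke Theorem~\ref{thm:lehnerfock}. The only difference is that you spell out the block bookkeeping and the positivity of $\lambda$ in the restricted infimum explicitly, which the paper leaves implicit.
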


\begin{proof}
Note that we can write
$$
	A_0\otimes p_\Omega + \sum_{i=1}^n
	\big( A_i\otimes l_i + A_i^*\otimes l_i^*\big)
	=
	\begin{bmatrix}
	0 & (R\otimes\id)^*L^* \\
	L(R\otimes\id) & 0 \end{bmatrix},
$$
so that Lemma \ref{lem:dilation} yields
\begin{multline*}
	\lambda_{\rm max}\Bigg(
	A_0\otimes p_\Omega + \sum_{i=1}^n
	\big( A_i\otimes l_i + A_i^*\otimes l_i^*\big)
	\Bigg) = \\
	\lambda_{\rm max}\big(
	L(T\otimes\id)L^*
	\big)^{1/2}
	=
	\big({-\lambda_{\rm min}(xx^*+b\otimes\id)}\big)^{1/2}.
\end{multline*}
The conclusion now follows from Theorem \ref{thm:lehnerfock}.
\end{proof}

We can now proceed in a similar manner as in \S\ref{sec:upper}.

\begin{lem}
\label{lem:lowerschur}
Write $Z\in\mathbb{C}^{s\times s}_{\rm s.a.}$ in the block decomposition
of Lemma \ref{lem:lowerlehner} as
$$
	Z = \begin{bmatrix}
	P & V_1 & V_2 \\
	V_1^* & Q & W \\
	V_2^* & W^* & z
	\end{bmatrix}.
$$
Let $\lambda>0$, and suppose that $Z>0$, $Z^{-1}>A_0$, and
$f(Z)=\lambda\id$. Then
$$
	z^{-1} > \frac{1}{\lambda} \frac{a_0a_0^*}{n+1},\qquad\qquad
	\sum_{j=1}^n N_j^*zN_j < \lambda\id,
$$
and
$$
	z^{-1} + 
	\sum_{i=1}^n M_i \Bigg(\lambda\id - \sum_{j=1}^n N_j^*zN_j
	\Bigg)^{-1}
	M_i^* \le \lambda\id.
$$
\end{lem}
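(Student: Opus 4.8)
The plan is to mirror, step by step, the proof of Lemma~\ref{lem:schurred}. First I would compute $F(Z)=Z^{-1}+\sum_{i=1}^n A_i^*ZA_i$ in the block decomposition of Lemma~\ref{lem:lowerlehner}. Using the form of the $A_i$, a direct multiplication gives
\[
\sum_{i=1}^n A_i^*ZA_i =
\begin{bmatrix}
0 & 0 & 0 \\
0 & \sum_i N_i^*zN_i & \sum_i N_i^*W^*M_i^* \\
0 & \sum_i M_iWN_i & \sum_i M_iQM_i^*
\end{bmatrix},
\]
so the hypothesis $F(Z)=\lambda\id$ forces the $(1,2)$- and $(1,3)$-blocks of $Z^{-1}$ to vanish and its $(1,1)$-block to equal $\lambda\id$. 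Consequently $Z^{-1}$, hence $Z$, is block diagonal with respect to the splitting of the first $d$ coordinates from the remaining $(2n+1)d$, so $Z=\mathrm{diag}(\lambda^{-1}\id,\mathcal{Z})$ with $\mathcal{Z}=\begin{bmatrix} Q & W \\ W^* & z\end{bmatrix}$; and, setting $\alpha=\lambda\id-\sum_j N_j^*zN_j$, $\beta=-\sum_j N_j^*W^*M_j^*$ and $\gamma=\lambda\id-\sum_j M_jQM_j^*$, one has $\mathcal{Z}^{-1}=\begin{bmatrix}\alpha & \beta \\ \beta^* & \gamma\end{bmatrix}$. The three asserted statements are now read off from positivity and Schur complements.

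The second inequality is immediate: $Z>0$ gives $\mathcal{Z}>0$, hence $\mathcal{Z}^{-1}>0$, so its principal submatrix $\alpha=\lambda\id-\sum_j N_j^*zN_j$ is positive definite. For the third inequality I would apply Lemma~\ref{lem:schur} to $\mathcal{Z}$ with pivot $z$; matching the resulting formula for $\mathcal{Z}^{-1}$ against $\begin{bmatrix}\alpha & \beta \\ \beta^* & \gamma\end{bmatrix}$ yields $\mathcal{Z}/z=Q-Wz^{-1}W^*=\alpha^{-1}$ and $\gamma=z^{-1}+z^{-1}W^*\alpha Wz^{-1}$. Since $z>0$ and $\alpha>0$, the first identity gives $Q\ge\alpha^{-1}$ and the second gives $\lambda\id-\sum_i M_iQM_i^*=\gamma\ge z^{-1}$; combining these, $z^{-1}+\sum_i M_i\alpha^{-1}M_i^*\le z^{-1}+\sum_i M_iQM_i^*\le\lambda\id$, which is the third inequality once we recall $\alpha=\lambda\id-\sum_j N_j^*zN_j$.

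Only the first inequality uses the hypothesis $Z^{-1}>A_0$. In block form this reads
\[
\begin{bmatrix}
\lambda\id & 0 & -a_0^*/\sqrt{n+1} \\
0 & \alpha & \beta \\
-a_0/\sqrt{n+1} & \beta^* & \gamma
\end{bmatrix}>0.
\]
Permuting the blocks so that the $\alpha$-block comes last and taking the Schur complement with respect to $\alpha>0$ (Lemma~\ref{lem:schur}) preserves positivity and gives $\begin{bmatrix}\lambda\id & -a_0^*/\sqrt{n+1} \\ -a_0/\sqrt{n+1} & \gamma-\beta^*\alpha^{-1}\beta\end{bmatrix}>0$. Here one uses the identity $\gamma-\beta^*\alpha^{-1}\beta=z^{-1}$, itself an instance of Lemma~\ref{lem:schur}: $\gamma-\beta^*\alpha^{-1}\beta$ is the Schur complement of the $\alpha$-block in $\mathcal{Z}^{-1}$, so its inverse is the corresponding corner block $z$ of $(\mathcal{Z}^{-1})^{-1}=\mathcal{Z}$. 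A final Schur complement with respect to the $\lambda\id$ block then yields $z^{-1}-\frac{1}{\lambda}\frac{a_0a_0^*}{n+1}>0$, the first inequality.

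The one part requiring care is the bookkeeping: permuting the blocks correctly, and in each application of Lemma~\ref{lem:schur} identifying which Schur complement of $\mathcal{Z}^{-1}$ or $\mathcal{Z}$ is the inverse of which corner block of the other---in particular the identity $z^{-1}=\gamma-\beta^*\alpha^{-1}\beta$. Otherwise the argument is a routine adaptation of Lemma~\ref{lem:schurred}, and every inverse that appears is automatically well-defined because $\alpha$, $\gamma$ and $z$ are principal submatrices of the positive definite matrices $\mathcal{Z}^{-1}$ and $\mathcal{Z}$.
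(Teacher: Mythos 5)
Your proof is correct and follows essentially the same route as the paper: read off the block structure of $Z$ (equivalently $V=0$, $P=\lambda^{-1}\id$) from $F(Z)=\lambda\id$, then extract the three inequalities via Schur complements applied to $\mathcal{Z}=\begin{bmatrix} Q & W\\ W^* & z\end{bmatrix}$ and to the positivity condition $Z^{-1}>A_0$. The only (harmless) deviation is in the first inequality, where you pivot on the $\lambda\id$ block and use $\gamma-\beta^*\alpha^{-1}\beta=z^{-1}$ to obtain $z^{-1}-\frac{1}{\lambda}\frac{a_0a_0^*}{n+1}>0$ directly, whereas the paper pivots on $Z_0^{-1}$ to get $\frac{a_0^*za_0}{n+1}<\lambda\id$ and then invokes the invertibility of $a_0$; both arguments are valid.
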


\begin{proof}
We begin by noting that
$$
	F(Z) = Z^{-1} + 
	\sum_{i=1}^n
	\begin{bmatrix}
	0 & 0 & 0 \\
	0 & N_i^*zN_i & N_i^*W^*M_i^* \\
	0 & M_iWN_i & M_iQM_i^*
	\end{bmatrix}.
$$
To compute $Z^{-1}$, we express
$$
	Z = \begin{bmatrix}
	P & V \\ V^* & Z_0
	\end{bmatrix},\qquad
	Z_0 = \begin{bmatrix} Q & W \\  W^* & z
	\end{bmatrix},\qquad
	V = \begin{bmatrix} V_1 & V_2 \end{bmatrix}
$$
and apply Lemma \ref{lem:schur}. Using that $F(Z)=\lambda\id$
yields $V=0$, $P=\lambda^{-1}\id$, and
$$
	Z_0^{-1} +
	\sum_{i=1}^n
	\begin{bmatrix} 
	N_i^*zN_i & N_i^*W^*M_i^* \\
	M_iWN_i & M_iQM_i^*
	\end{bmatrix} =
	\lambda\id,
$$
where we note that $Z>0$ implies that $Z_0>0$.

Next, computing $Z_0^{-1}$ using Lemma \ref{lem:schur}, we obtain
\begin{align}
\label{eq:l1}
	&(Z_0/z)^{-1} + \sum_{j=1}^n N_j^*zN_j = \lambda\id,
\\
\label{eq:l2}
&	z^{-1} +
	z^{-1} W^*(Z_0/z)^{-1}Wz^{-1} +
	\sum_{i=1}^n M_iQM_i^* = \lambda\id.
\end{align}
with $z>0$ and $Z_0/z>0$. 
This yields the desired inequality
$\lambda\id - \sum_{j=1}^n N_j^*zN_j = (Z_0/z)^{-1}>0$ by
\eqref{eq:l1}. Moreover, by \eqref{eq:l1} and the
definition of $Z_0/z$,
$$
	Q \ge Q-Wz^{-1}W^* = Z_0/z =
	\Bigg(\lambda\id - \sum_{j=1}^n N_j^*zN_j\Bigg)^{-1},
$$
and thus the last inequality in the statement follows from \eqref{eq:l2}.

Finally, note that as $P=\lambda^{-1}\id$ and $V=0$, the condition
$Z^{-1}>A_0$ becomes
$$
	\begin{bmatrix}
	\lambda\id & C^* \\
	C & Z_0^{-1}
	\end{bmatrix} >
	0,
	\qquad\quad
	C = - \frac{1}{\sqrt{n+1}}\begin{bmatrix} 0 \\ a_0
	\end{bmatrix},
$$
so that Lemma \ref{lem:schur} implies that
$\lambda\id - C^*Z_0C>0$. Thus $\frac{a_0^*za_0}{n+1}<\lambda\id$.
As we assumed that $a_0a_0^*>0$, the matrix $a_0$ is invertible, and thus
taking the inverse of both sides and rearranging yields the desired 
inequality $z^{-1}>\frac{1}{\lambda}\frac{a_0a_0^*}{n+1}$.
\end{proof}

The above lemma gives rise to the following value reduction principle.

\begin{lem}
\label{lem:lowerred}
Let $\lambda>0$, $z\in\mathbb{C}^{d\times d}_{\rm s.a.}$ 
with
$z^{-1}>\frac{1}{\lambda}\frac{a_0a_0^*}{n+1}$,
$\sum_{j=1}^n N_j^*zN_j<\lambda\id$.
Define
$$
	G(\lambda;z) =
	\begin{bmatrix}
	\lambda^{-1}\id & 0 & 0 \\
	0 & \big(\lambda\id - \sum_{j=1}^n N_j^*zN_j\big)^{-1} & 0 \\
	0 & 0 & z
	\end{bmatrix}.
$$
Then $G(\lambda;z)>0$, $G(\lambda,z)^{-1}>A_0$, and
$$
	F(G(\lambda;z)) =
	\begin{bmatrix}
	\lambda\id & 0 & 0 \\
	0 & \lambda\id & 0 \\
	0 & 0 & z^{-1} + \sum_{i=1}^n
	M_i\big(\lambda\id- \sum_{j=1}^n N_j^*zN_j\big)^{-1} M_i^*
	\end{bmatrix}.
$$
In particular, if $Z$ is as in Lemma \ref{lem:lowerschur}, then
$F(G(\lambda;z)) \le F(Z)$.
\end{lem}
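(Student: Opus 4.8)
The plan is to mimic the proof of Lemma~\ref{lem:valred}: the matrix $G(\lambda;z)$ is built so as to be an admissible competitor for the variational problem of Lemma~\ref{lem:lowerlehner}, and its value is controlled precisely by the hypotheses imposed on $z$, which are nothing but the conclusions of Lemma~\ref{lem:lowerschur}. First I would check that $G(\lambda;z)>0$. Since $G(\lambda;z)$ is block diagonal in the decomposition of Lemma~\ref{lem:lowerlehner}, this reduces to the three observations that $\lambda^{-1}\id>0$; that $\bigl(\lambda\id-\sum_{j=1}^n N_j^*zN_j\bigr)^{-1}>0$, which is immediate from the hypothesis $\sum_j N_j^*zN_j<\lambda\id$; and that $z>0$, which holds because $z^{-1}>\frac1\lambda\frac{a_0a_0^*}{n+1}\ge 0$ forces $z^{-1}>0$.

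Next I would verify $G(\lambda;z)^{-1}>A_0$. Inverting the block-diagonal matrix gives
$$
	G(\lambda;z)^{-1}=
	\begin{bmatrix}
	\lambda\id & 0 & 0 \\
	0 & \lambda\id-\sum_{j=1}^n N_j^*zN_j & 0 \\
	0 & 0 & z^{-1}
	\end{bmatrix},
$$
so that $G(\lambda;z)^{-1}-A_0$ has vanishing $(1,2),(2,1),(2,3),(3,2)$ blocks, middle block $\lambda\id-\sum_j N_j^*zN_j>0$, and the entries $\lambda\id$, $-(n+1)^{-1/2}a_0^*$, $-(n+1)^{-1/2}a_0$, $z^{-1}$ in positions $(1,1),(1,3),(3,1),(3,3)$. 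Permuting the first and third block rows and columns to be adjacent exhibits $G(\lambda;z)^{-1}-A_0$ as block diagonal with a $2nd$-dimensional block (the positive middle block) and the $2d$-dimensional block $\bigl(\begin{smallmatrix}\lambda\id & -(n+1)^{-1/2}a_0^*\\ -(n+1)^{-1/2}a_0 & z^{-1}\end{smallmatrix}\bigr)$. By the Schur complement (Lemma~\ref{lem:schur}), positivity of the latter is equivalent to $\lambda\id>0$ and $z^{-1}-\frac1\lambda\frac{a_0a_0^*}{n+1}>0$, i.e.\ exactly the first hypothesis on $z$; hence $G(\lambda;z)^{-1}>A_0$.

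Finally I would compute $F(G(\lambda;z))=G(\lambda;z)^{-1}+\sum_i A_i^*G(\lambda;z)A_i$. A direct block multiplication shows that $A_i^*G(\lambda;z)A_i$ is block diagonal with diagonal blocks $0$, $N_i^*zN_i$, and $M_i\bigl(\lambda\id-\sum_j N_j^*zN_j\bigr)^{-1}M_i^*$; summing over $i$ and adding $G(\lambda;z)^{-1}$, the $(2,2)$ block telescopes to $\lambda\id$ and one reads off the asserted formula for $F(G(\lambda;z))$. For the last claim, if $Z$ is as in Lemma~\ref{lem:lowerschur} then $F(Z)=\lambda\id$, whereas the $(3,3)$ block of $F(G(\lambda;z))$ is $\le\lambda\id$ by the last inequality of Lemma~\ref{lem:lowerschur} while its other diagonal blocks equal $\lambda\id$; hence $F(G(\lambda;z))\le\lambda\id=F(Z)$. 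The proof is essentially bookkeeping; the only genuine point, exactly as in Lemma~\ref{lem:valred}, is matching the hypotheses on $z$ against the conclusions of Lemma~\ref{lem:lowerschur}, the positivity requirement $G(\lambda;z)^{-1}>A_0$ being the one that consumes the first of those conclusions, so a little care is needed to set up the right Schur complement there.
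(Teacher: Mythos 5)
Your proof is correct and takes essentially the same route as the paper: the only non-immediate point is $G(\lambda;z)^{-1}>A_0$, which both you and the paper reduce via a Schur-complement argument (Lemma~\ref{lem:schur}) to the hypothesis $z^{-1}>\frac{1}{\lambda}\frac{a_0a_0^*}{n+1}$, the remaining claims being direct block computations together with the last inequality of Lemma~\ref{lem:lowerschur}. The only (harmless) difference is that you permute blocks and take the Schur complement with respect to the $\lambda\id$ corner, which yields the hypothesis directly, whereas the paper complements out the lower-right blocks and then uses invertibility of $a_0$ as at the end of the proof of Lemma~\ref{lem:lowerschur}.
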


\begin{proof}
The only conclusion of the lemma that does not follow immediately from the 
definitions is $G(\lambda,z)^{-1}>A_0$, that is, that
$$
	\begin{bmatrix}
	\lambda\id & 0 & -\frac{a_0^*}{\sqrt{n+1}} \\
	0 & \lambda\id - \sum_{j=1}^n N_j^*zN_j & 0 \\
	-\frac{a_0}{\sqrt{n+1}} & 0 & z^{-1}
	\end{bmatrix}>0.	
$$
By Lemma \ref{lem:schur}, this is equivalent to
$$
	\lambda\id 
	-
	\frac{1}{n+1}
	\begin{bmatrix} 0 & a_0^* \end{bmatrix}
	\begin{bmatrix}\lambda\id - \sum_{j=1}^n N_j^*zN_j & 0
	\\
	0 & z^{-1}
	\end{bmatrix}^{-1}
	\begin{bmatrix} 0 \\ a_0 \end{bmatrix}>0,
$$
which is further equivalent to the assumption that 
$z^{-1}>\frac{1}{\lambda}
\frac{a_0a_0^*}{n+1}$ by the argument given at the end of the proof of
Lemma \ref{lem:lowerschur}.
\end{proof}

We readily deduce the following.

\begin{cor}
\label{cor:lowerprevar}
For $x$ as in \eqref{eq:mainx}, we have
$$
	-\lambda_{\rm min}(xx^*+b\otimes\id) = 
	\inf_{\substack{
	z^{-1}>\frac{a_0a_0^*}{n+1} \\
	\sum_j \! N_j^*zN_j < \id}}
	\lambda_{\rm max}\Bigg(
	z^{-1} + \sum_{i=1}^n
        M_i\Bigg(\id- \sum_{j=1}^n N_j^*zN_j\Bigg)^{-1} M_i^*	
	\Bigg).
$$
\end{cor}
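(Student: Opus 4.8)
The plan is to combine Lemma \ref{lem:lowerlehner} with the value reduction principle of Lemma \ref{lem:lowerred}, exactly mirroring the derivation of Proposition \ref{prop:mainupper} from Corollary \ref{cor:upperlehner} and Lemma \ref{lem:valred}. First I would start from the two-sided optimization
\begin{equation*}
	\big({-\lambda_{\rm min}(xx^*+b\otimes\id)}\big)^{1/2} =
	\inf_{\lambda>0}
	\inf_{\substack{Z>0,\ Z^{-1}>A_0 \\ F(Z)=\lambda\id}}
	\lambda_{\rm max}(F(Z))
\end{equation*}
given by Lemma \ref{lem:lowerlehner}. For the ``$\ge$'' direction, given any feasible $Z$ with $F(Z)=\lambda\id$, Lemma \ref{lem:lowerschur} produces a matrix $z$ satisfying $z^{-1}>\tfrac{1}{\lambda}\tfrac{a_0a_0^*}{n+1}$ and $\sum_j N_j^*zN_j<\lambda\id$, and then $G(\lambda;z)$ of Lemma \ref{lem:lowerred} is feasible with $F(G(\lambda;z))\le F(Z)=\lambda\id$, hence $\lambda_{\rm max}(F(G(\lambda;z)))\le\lambda=\lambda_{\rm max}(F(Z))$. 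For the ``$\le$'' direction, every $G(\lambda;z)$ is itself feasible for the original problem by Lemma \ref{lem:lowerred}, so the infimum over the $G(\lambda;z)$ dominates the original infimum. This sandwich shows
\begin{equation*}
	\big({-\lambda_{\rm min}(xx^*+b\otimes\id)}\big)^{1/2} =
	\inf_{\lambda>0}
	\inf_{\substack{z^{-1}>\frac{1}{\lambda}\frac{a_0a_0^*}{n+1} \\ \sum_j N_j^*zN_j<\lambda\id}}
	\lambda_{\rm max}(F(G(\lambda;z))).
\end{equation*}

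Next I would substitute the explicit block form of $F(G(\lambda;z))$ from Lemma \ref{lem:lowerred}, which gives
\begin{equation*}
	\lambda_{\rm max}(F(G(\lambda;z))) =
	\max\Bigg\{\lambda,\
	\lambda_{\rm max}\Bigg(z^{-1} + \sum_{i=1}^n
	M_i\Bigg(\lambda\id - \sum_{j=1}^n N_j^*zN_j\Bigg)^{-1} M_i^*\Bigg)\Bigg\}.
\end{equation*}
Then I would make the change of variables $z\leftarrow\lambda z$ in the inner infimum: this turns the constraint $z^{-1}>\tfrac{1}{\lambda}\tfrac{a_0a_0^*}{n+1}$ into $z^{-1}>\tfrac{a_0a_0^*}{n+1}$, turns $\sum_j N_j^*zN_j<\lambda\id$ into $\sum_j N_j^*zN_j<\id$, and scales the second argument of the maximum to $\lambda$ times the target matrix $z^{-1}+\sum_i M_i(\id-\sum_j N_j^*zN_j)^{-1}M_i^*$. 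Writing $c(z)=\lambda_{\rm max}(z^{-1}+\sum_i M_i(\id-\sum_j N_j^*zN_j)^{-1}M_i^*)\ge 0$, the expression becomes $\inf_{\lambda>0}\max\{\lambda,\lambda c(z)\}$; applying the elementary identity $\inf_{\lambda>0}\max\{\lambda,\lambda c\}$—wait, that is not quite the right normalization, so more carefully one keeps the form $\max\{\lambda,\lambda^{-1}\cdot(\lambda^2 c(z))\}$ or, cleanest, observes directly that after the substitution the second term is $\lambda\,c(z)$ while the first is $\lambda$, and since we separately still have the outer $\inf_{\lambda>0}$ one instead uses the substitution to clear $\lambda$ entirely. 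In fact the cleaner route is the one used in Proposition \ref{prop:mainupper}: after $z\leftarrow\lambda z$ one gets $\max\{\lambda,\lambda^{-1}c(z)\}$ with $c(z)$ as above (the $\lambda^{-1}$ appearing because $z^{-1}$ scales as $\lambda^{-1}$ and the resolvent term as $\lambda^{-1}$), and then $\inf_{\lambda>0}\max\{\lambda,\lambda^{-1}c\}=c^{1/2}$ for $c\ge0$. Squaring both sides and negating yields the claimed formula.

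The only genuinely delicate point is bookkeeping the powers of $\lambda$ under the rescaling $z\leftarrow\lambda z$, to confirm that both summands $z^{-1}$ and $M_i(\lambda\id-\sum_j N_j^*zN_j)^{-1}M_i^*$ carry the same factor $\lambda^{-1}$ and so can be pulled out together, matching the scalar identity $\inf_{\lambda>0}\max\{\lambda,\lambda^{-1}c\}=c^{1/2}$; this is exactly parallel to the end of the proof of Proposition \ref{prop:mainupper}, so no new idea is required. One should also record that all the intermediate feasible sets are nonempty (guaranteed by Lemma \ref{lem:lowerlehner}, since $\lambda_{\rm min}(xx^*+b\otimes\id)$ is finite and the relevant operator is bounded), so that the infima are over nonempty sets and the sandwich argument is valid.
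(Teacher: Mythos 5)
Your proposal is correct and follows essentially the same route as the paper, whose proof of this corollary is precisely to apply Lemmas \ref{lem:lowerlehner} and \ref{lem:lowerred} (via the sandwich argument and the rescaling $z\leftarrow\lambda z$ with the identity $\inf_{\lambda>0}\max\{\lambda,\lambda^{-1}c\}=c^{1/2}$) exactly as in Proposition \ref{prop:mainupper}. Your momentary mis-scaling of the second term as $\lambda\,c(z)$ is self-corrected to the right $\lambda^{-1}c(z)$, so no substantive gap remains; the final ``negating'' remark is unnecessary since squaring already yields the stated identity for $-\lambda_{\rm min}(xx^*+b\otimes\id)$.
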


\begin{proof}
Apply Lemmas \ref{lem:lowerlehner} and \ref{lem:lowerred}
exactly as in the proof of Proposition \ref{prop:mainupper}.
\end{proof}

Unlike in \S\ref{sec:upper}, the complication that now arises is that 
Corollary~\ref{cor:lowerprevar} depends only implicitly on the matrices 
$a_1,\ldots,a_n,b$ that define the operator on the left-hand side. We must 
therefore further simplify the variational principle to arrive at
the explicit form that is stated in Theorem \ref{thm:main}.
To this end, define the matrices
$$
	N=\begin{bmatrix} N_1 \\ \vdots \\ N_n \end{bmatrix},
	\qquad\qquad
	A=\begin{bmatrix} a_1 \\ \vdots \\ a_n \end{bmatrix}
$$
in $\mathbb{C}^{n\times 1}\otimes\mathbb{C}^{d\times 2nd}$ and
$\mathbb{C}^{n\times 1}\otimes\mathbb{C}^{d\times m}$, respectively.
Then we have the following.

\begin{lem}
\label{lem:lowerwoodbury}
Let $z^{-1}>\frac{a_0a_0^*}{n+1}$. Then $\sum_j N_j^*zN_j < \id$ and
\begin{multline*}
	\sum_{i=1}^n M_i
	\Bigg(\id- \sum_{j=1}^n N_j^*zN_j\Bigg)^{-1} M_i^* 
	=  \\
	\frac{n}{n+1}a_0a_0^*
	-
	\sum_{i=1}^n a_i 
	\Bigg(\id + 
	\sum_{j=1}^n a_j^*
	\bigg( z^{-1} -
        \frac{a_0a_0^*}{n+1}\bigg)^{-1}a_j \Bigg)^{-1}
	a_i^*.
\end{multline*}
\end{lem}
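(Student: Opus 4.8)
The plan is to read off the relevant inner‑product identities from the factorization $T=RR^*$, and then to reduce the two matrix inverses appearing in the statement to tractable form by applying the matrix inversion lemma (Lemma~\ref{lem:woodbury}) twice. Throughout, let $M=\begin{bmatrix}M_1\\\vdots\\M_n\end{bmatrix}$ (to accompany the matrices $N,A$ defined just above the lemma), and let $\mathbf z=\id_n\otimes z$ denote the block‑diagonal matrix with $n$ copies of $z$ on the diagonal. Since $T=RR^*$, the lower‑right $2nd\times 2nd$ block of $T$ equals $BB^*$, and reading off its entries gives, for all $i,j$,
$$
	M_iM_j^*=N_iN_j^*=\tfrac{a_0a_0^*}{n+1}\,1_{i=j}-a_ia_j^*,\qquad
	M_iN_j^*=N_iM_j^*=-a_ia_j^*,
$$
equivalently $MM^*=NN^*=\id_n\otimes\tfrac{a_0a_0^*}{n+1}-AA^*$ and $MN^*=NM^*=-AA^*$. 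In particular $\sum_iM_iM_i^*=\tfrac{n}{n+1}a_0a_0^*-\sum_ia_ia_i^*$, and $M_iN^*=-a_iA^*$, $NM_i^*=-Aa_i^*$. Note also that $z^{-1}>\tfrac{a_0a_0^*}{n+1}\ge 0$ forces $z>0$.

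To prove $\sum_jN_j^*zN_j<\id$, I would pass to the companion operator: $\sum_jN_j^*zN_j=N^*\mathbf zN=X^*X$ with $X=(\id_n\otimes z^{1/2})N$, so $X^*X$ and $XX^*$ have the same nonzero spectrum. Conjugating $NN^*=\id_n\otimes\tfrac{a_0a_0^*}{n+1}-AA^*$ by $\id_n\otimes z^{1/2}$ and discarding the negative‑semidefinite term gives $XX^*\le\id_n\otimes\big(z^{1/2}\tfrac{a_0a_0^*}{n+1}z^{1/2}\big)$, which is $<\id$ since $z^{1/2}\tfrac{a_0a_0^*}{n+1}z^{1/2}<z^{1/2}z^{-1}z^{1/2}=\id$ by hypothesis. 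Hence $\lambda_{\rm max}(XX^*)<1$, so $\lambda_{\rm max}(X^*X)<1$, i.e.\ $\sum_jN_j^*zN_j<\id$; in particular $\id-\sum_jN_j^*zN_j>0$, so the inverse in the statement is well defined.

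For the identity, set $\mathcal D=\mathbf z^{-1}-NN^*$ and note $\mathcal D=\id_n\otimes\big(z^{-1}-\tfrac{a_0a_0^*}{n+1}\big)+AA^*>0$, so that $C:=\big(z^{-1}-\tfrac{a_0a_0^*}{n+1}\big)^{-1}$ exists and $\mathcal D-AA^*=\id_n\otimes C^{-1}$. Applying Lemma~\ref{lem:woodbury} with $B\leftarrow N^*$ and $D\leftarrow\mathbf z^{-1}$ gives $\big(\id-\sum_jN_j^*zN_j\big)^{-1}=(\id-N^*\mathbf zN)^{-1}=\id+N^*\mathcal D^{-1}N$. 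Substituting this and using $M_iN^*=-a_iA^*$, $NM_i^*=-Aa_i^*$, and $\sum_iM_iM_i^*=\tfrac{n}{n+1}a_0a_0^*-\sum_ia_ia_i^*$, one obtains
$$
	\sum_{i=1}^n M_i\Big(\id-\sum_{j=1}^n N_j^*zN_j\Big)^{-1}M_i^*
	=\tfrac{n}{n+1}a_0a_0^*-\sum_{i=1}^n a_i(\id-K)a_i^*,
	\qquad K:=A^*\mathcal D^{-1}A.
$$
A second application of Lemma~\ref{lem:woodbury}, now with $B\leftarrow A^*$ and $D\leftarrow-(\id_n\otimes C^{-1})$ — permissible since the lemma only requires $D$ to be self‑adjoint, not positive — yields
$$
	\Big(\id+\sum_{j=1}^n a_j^*Ca_j\Big)^{-1}
	=\id-A^*(\id_n\otimes C^{-1}+AA^*)^{-1}A=\id-K,
$$
where the first equality again uses $\mathcal D-AA^*=\id_n\otimes C^{-1}$ together with $\sum_j a_j^*Ca_j=A^*(\id_n\otimes C)A$. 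Hence $\id-K=\big(\id+\sum_j a_j^*(z^{-1}-\tfrac{a_0a_0^*}{n+1})^{-1}a_j\big)^{-1}$, and substituting into the previous display gives precisely the asserted identity.

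The argument is mechanical once the inner‑product identities have been extracted; the steps requiring care are getting $M_iM_j^*$, $N_iN_j^*$, $M_iN_j^*$ right from the block matrix $T$, tracking the block/tensor factors, and choosing the correct pivot $D$ in each application of Lemma~\ref{lem:woodbury} — in particular, the second application uses a negative‑definite $D$, which is harmless here but easy to overlook.
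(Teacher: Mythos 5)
Your proof is correct and takes essentially the same route as the paper: extract the Gram identities $M_iM_j^*=N_iN_j^*=1_{i=j}\tfrac{a_0a_0^*}{n+1}-a_ia_j^*$, $M_iN_j^*=-a_ia_j^*$ from $T=RR^*$, then apply Lemma~\ref{lem:woodbury} twice, first with pivot $\id\otimes z^{-1}$ and then with the (negative) pivot $-\id\otimes\big(z^{-1}-\tfrac{a_0a_0^*}{n+1}\big)$. The only, harmless, deviation is in proving $\sum_j N_j^*zN_j<\id$: you compare the spectra of $X^*X$ and $XX^*$ with $X=(\id\otimes z^{1/2})N$, while the paper reads the bound off the same inversion identity in the form $\sum_j N_j^*zN_j=\id-\big(\id+N^*(\id\otimes z^{-1}-NN^*)^{-1}N\big)^{-1}<\id$.
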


\begin{proof}
We first note that $M_iN_j^* = - a_ia_j^*$
and $M_iM_j^* = N_iN_j^* = 1_{i=j} \frac{a_0a_0^*}{n+1} - a_ia_j^*$
by the definition of $M_i,N_i$.
We can therefore write
$$
	\id\otimes z^{-1}-NN^* =
	\id\otimes\bigg(z^{-1}-\frac{a_0a_0^*}{n+1}\bigg) + AA^* > 0,
$$
which yields the desired inequality
\begin{align*}
	\sum_{j=1}^n N_j^*zN_j =
	N^*(\id\otimes z)N &= 
	N^*\big(\id\otimes z^{-1}-NN^*+NN^*\big)^{-1}N  \\
	&=
	\id - \big(\id + N^*
	\big(\id\otimes z^{-1}-NN^*\big)^{-1} N \big)^{-1}
	< \id
\end{align*}
by Lemma \ref{lem:woodbury}.
Next, we rearrange the
previous equation display as
$$
	\Bigg(\id- \sum_{j=1}^n N_j^*zN_j\Bigg)^{-1} 
	= \id + 
	N^*\big(
	\id\otimes z^{-1} - NN^*
	\big)^{-1}N.
$$
As $M_iN^* = a_i A^*$ and $M_iM_i^*=\frac{a_0a_0^*}{n+1} - a_ia_i^*$,
we can compute
\begin{multline*}
	\sum_{i=1}^n M_i
	\Bigg(\id- \sum_{j=1}^n N_j^*zN_j\Bigg)^{-1} M_i^* 
	=  \\
	\frac{n}{n+1}a_0a_0^*
	+ 
	\sum_{i=1}^n a_i 
	\Bigg[
	A^*
	\Bigg(
	\id\otimes\bigg( z^{-1} - 
	\frac{a_0a_0^*}{n+1}\bigg) + AA^*
	\Bigg)^{-1}A - \id \Bigg] a_i^*.
\end{multline*}
Applying Lemma \ref{lem:woodbury} again yields the conclusion.
\end{proof}

We can now prove the second part of Theorem \ref{thm:main}.

\begin{prop}
\label{prop:mainlower}
For $x$ as in \eqref{eq:mainx}, we have
$$
	\lambda_{\rm min}(xx^*+b\otimes\id) = 
	\sup_{z<0}\,
	\lambda_{\rm min}\Bigg(
	b + z^{-1} +
	\sum_{i=1}^n a_i 
	\Bigg(\id - 
	\sum_{j=1}^n a_j^*za_j \Bigg)^{-1}
	a_i^*
	\Bigg).
$$
\end{prop}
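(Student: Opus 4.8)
The plan is to start from Corollary~\ref{cor:lowerprevar}, use Lemma~\ref{lem:lowerwoodbury} to eliminate the auxiliary matrices $M_i,N_i$, and then perform a change of variables that identifies the resulting variational problem with the one in the statement. As noted at the beginning of this section, the identity to be proved is unchanged under $b\mapsto b+c\id$, so we may keep the standing assumption $-b=a_0a_0^*>0$ with $a_0$ invertible.

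By Lemma~\ref{lem:lowerwoodbury}, the constraint $\sum_j N_j^*zN_j<\id$ in Corollary~\ref{cor:lowerprevar} is automatically implied by $z^{-1}>\frac{a_0a_0^*}{n+1}$ (which in particular forces $z>0$), and on this set the matrix inside $\lambda_{\rm max}$ in Corollary~\ref{cor:lowerprevar} equals
$$
	z^{-1} + \frac{n}{n+1}a_0a_0^*
	- \sum_{i=1}^n a_i\Bigg(\id +
	\sum_{j=1}^n a_j^*\Big(z^{-1}-\tfrac{a_0a_0^*}{n+1}\Big)^{-1}a_j
	\Bigg)^{-1} a_i^*.
$$
Hence $-\lambda_{\rm min}(xx^*+b\otimes\id)$ is the infimum of $\lambda_{\rm max}$ of this expression over $\{z\in\mathbb{C}^{d\times d}_{\rm s.a.}:z^{-1}>\frac{a_0a_0^*}{n+1}\}$.

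Next I would introduce the substitution $w=-\big(z^{-1}-\frac{a_0a_0^*}{n+1}\big)^{-1}$, equivalently $z^{-1}=\frac{a_0a_0^*}{n+1}-w^{-1}$. Since $a_0a_0^*>0$, this is a bijection from $\{z\in\mathbb{C}^{d\times d}_{\rm s.a.}:z^{-1}>\frac{a_0a_0^*}{n+1}\}$ onto $\{w\in\mathbb{C}^{d\times d}_{\rm s.a.}:w<0\}$: the condition $z^{-1}-\frac{a_0a_0^*}{n+1}>0$ is equivalent to $-w^{-1}>0$, i.e.\ $w<0$, while conversely for $w<0$ the matrix $\frac{a_0a_0^*}{n+1}-w^{-1}$ is positive definite, hence invertible, so $z$ is well defined. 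A direct computation using $b=-a_0a_0^*$ and $(z^{-1}-\frac{a_0a_0^*}{n+1})^{-1}=-w$ then shows that the displayed matrix becomes
$$
	-\Bigg(b + w^{-1} + \sum_{i=1}^n a_i
	\Bigg(\id - \sum_{j=1}^n a_j^*wa_j\Bigg)^{-1}a_i^*\Bigg).
$$
Note that $w<0$ already ensures $\id-\sum_j a_j^*wa_j\ge\id>0$, so no extra constraint on $w$ is needed, matching the statement.

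Finally, since $\lambda_{\rm max}(-M)=-\lambda_{\rm min}(M)$ for self-adjoint $M$, taking the infimum over $w$ and negating gives $\lambda_{\rm min}(xx^*+b\otimes\id)=\sup_{w<0}\lambda_{\rm min}\big(b+w^{-1}+\sum_i a_i(\id-\sum_j a_j^*wa_j)^{-1}a_i^*\big)$, which is the claim after renaming $w$ to $z$. All the algebra here is routine; the only point requiring care is checking that the change of variables matches up the two constraint sets, which is precisely where invertibility of $a_0$ (hence strict positivity of $\frac{a_0a_0^*}{n+1}$) enters. I do not expect a genuine obstacle beyond this bookkeeping, since all the analytic content has already been extracted in Corollary~\ref{cor:lowerprevar} and Lemma~\ref{lem:lowerwoodbury}.
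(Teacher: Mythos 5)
Your proposal is correct and follows essentially the same route as the paper: the paper's proof is precisely "Corollary~\ref{cor:lowerprevar} plus Lemma~\ref{lem:lowerwoodbury} plus $a_0a_0^*=-b$, with the substitution $z^{-1}-\frac{a_0a_0^*}{n+1}\leftarrow -z^{-1}$", and your change of variables $w=-\big(z^{-1}-\frac{a_0a_0^*}{n+1}\big)^{-1}$ is exactly that substitution, spelled out together with the (correct) matching of the constraint sets and the passage from $\inf\lambda_{\rm max}$ to $\sup\lambda_{\rm min}$.
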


\begin{proof}
This follows readily from Corollary \ref{cor:lowerprevar},
Lemma \ref{lem:lowerwoodbury}, and $a_0a_0^*=-b$, where
we make the substitution $z^{-1} - \frac{a_0a_0^*}{n+1}
\leftarrow -z^{-1}$.
\end{proof}

\section{The matrix Cauchy transform}
\label{sec:cauchy}

It remains to prove the last part of Theorem \ref{thm:main}, viz., that 
the variational principles can be restricted to $z$ such that the matrix 
on the right-hand side is a multiple of the identity. This will be deduced 
from another result that is useful in its own right. Lemma 
\ref{lem:reduction} will follow as a byproduct of the proof.

Define an analytic function
$$
	G:\mathbb{C}\backslash\spc(xx^*+b\otimes\id) \to
	\mathbb{C}^{d\times d},
$$
the \emph{matrix Cauchy transform} of $xx^*+b\otimes\id$, by
$$
	G(\lambda) := 
	({\mathrm{id}\otimes\tau})\Big[
	\big(\lambda - xx^* - b\otimes\id\big)^{-1}
	\Big].
$$
Here $\tau$ is the trace on the $C^*$-probability space 
$(\mathcal{A},\tau)$ in which the 
free semicircular family $s_1,\ldots,s_n$ is defined. The following
is the main result of this section.

\begin{thm}
\label{thm:cauchy}
For every $\lambda\in \mathbb{C}\backslash
\mathrm{conv}(\spc(xx^*+b\otimes\id))$, we have
$$
	b + G(\lambda)^{-1} +
	\sum_{i=1}^n a_i \Bigg(
	\id - \sum_{j=1}^n a_j^*G(\lambda) a_j
	\Bigg)^{-1} a_i^* = \lambda\id.
$$
In particular, both inverses in this equation exist.
\end{thm}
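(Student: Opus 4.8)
The plan is to obtain the stated identity as the operator-valued Schwinger--Dyson equation (``matrix Dyson equation'') for a linearization of $xx^*+b\otimes\id$. First I would linearize: set
$\tilde x = \begin{bmatrix} 0 & x^* \\ x & 0\end{bmatrix} = \sum_{i=1}^n \tilde a_i\otimes s_i \in \mathbb{C}^{(m+d)\times(m+d)}\otimes\mathcal{A}$, where $\tilde a_i = \begin{bmatrix} 0 & a_i^* \\ a_i & 0\end{bmatrix}$, and let $E := \mathrm{id}\otimes\tau$. Since $s_1,\dots,s_n$ is a \emph{free} semicircular family, the mixed $\tau$-moments of the $s_i$ are sums over non-crossing pair partitions, so $\tilde x$ is a $\mathbb{C}^{(m+d)\times(m+d)}$-valued semicircular element with covariance $\eta(Y)=\sum_i \tilde a_i Y \tilde a_i$. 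For $\lambda\notin\spc(xx^*+b\otimes\id)$ put $\Lambda = \begin{bmatrix}\id_m & 0 \\ 0 & \lambda\id_d - b\end{bmatrix}$; by a Schur complement (Lemma~\ref{lem:schur}) the operator $\Lambda\otimes\id - \tilde x$ is invertible, its $(2,2)$ block equals $R := (\lambda - xx^* - b\otimes\id)^{-1}$, and its off-diagonal blocks equal $x^*R$ and $Rx$. Consequently the $(2,2)$ block of $\tilde G(\Lambda) := E[(\Lambda\otimes\id - \tilde x)^{-1}]$ is precisely $G(\lambda)$.

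Second I would record two structural facts about $\tilde G(\Lambda)$. \emph{(i) Block-diagonality:} the trace-preserving $*$-automorphism $\alpha$ of $\mathcal{A}$ with $\alpha(s_i)=-s_i$ exists because $(-s_i)_i$ is again a free semicircular family; it fixes $xx^*$, hence $R$, but negates $x$, and since $\mathrm{id}\otimes\alpha$ commutes with $E$ the off-diagonal blocks $E[x^*R]$ and $E[Rx]$ of $\tilde G(\Lambda)$ vanish, so $\tilde G(\Lambda)=\mathrm{diag}(G_{11},G(\lambda))$. \emph{(ii) Schwinger--Dyson equation:} $\Lambda\tilde G(\Lambda)=\id + \eta(\tilde G(\Lambda))\tilde G(\Lambda)$, equivalently $\tilde G(\Lambda)=(\Lambda-\eta(\tilde G(\Lambda)))^{-1}$, with both $\tilde G(\Lambda)$ and $\Lambda-\eta(\tilde G(\Lambda))$ invertible. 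Granting these, the proof finishes by block-matrix algebra: $\eta$ sends block-diagonal matrices to block-diagonal matrices, with $\eta(\mathrm{diag}(Y_{11},Y_{22}))=\mathrm{diag}(\sum_i a_i^*Y_{22}a_i,\sum_i a_iY_{11}a_i^*)$, so writing (ii) out block by block gives $G_{11}=(\id-\sum_j a_j^*G(\lambda)a_j)^{-1}$ and $G(\lambda)^{-1}=\lambda\id-b-\sum_i a_iG_{11}a_i^*$; eliminating $G_{11}$ yields the asserted identity, and the invertibility of the two diagonal blocks of $\Lambda-\eta(\tilde G(\Lambda))$ is exactly the claim that both inverses in the statement exist.

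The main work lies in establishing (ii) at the relevant $\Lambda$, which for real $\lambda$ is self-adjoint and in general lies neither in the operator upper half-plane nor in a region where $\Lambda\otimes\id-\tilde x$ is sign-definite. I would first prove (ii) when $\|\Lambda^{-1}\|$ is small, where it follows by expanding $(\Lambda\otimes\id-\tilde x)^{-1}$ in a norm-convergent Neumann series, applying $E$ term by term via the non-crossing pair-partition moment formula for the $s_i$, and resumming --- the standard combinatorial derivation of the matrix Dyson equation. Then I would propagate (ii) along the family $\Lambda=\Lambda(\lambda)$ as $\lambda$ runs over $\mathbb{C}\setminus\mathrm{conv}(\spc(xx^*+b\otimes\id))$: this set is connected and contains a neighbourhood of $\infty$, and $\lambda\mapsto\tilde G(\Lambda(\lambda))$ is analytic on it because $\Lambda(\lambda)\otimes\id-\tilde x$ is invertible throughout. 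A bootstrap does the rest: on the connected component containing $\infty$ of the open set $\{\lambda:\Lambda(\lambda)-\eta(\tilde G(\Lambda(\lambda)))\text{ invertible}\}$, equation (ii) holds by analytic continuation; continuity of $\tilde G$ up to the boundary of that component (valid since $\lambda\notin\spc$ there) together with the relation $\tilde G(\Lambda)(\Lambda-\eta(\tilde G(\Lambda)))=\id$ forces $\Lambda-\eta(\tilde G(\Lambda))$ to remain invertible across the boundary, so the component must exhaust $\mathbb{C}\setminus\mathrm{conv}(\spc(xx^*+b\otimes\id))$. I expect this passage --- pinning down precisely the domain on which the Dyson equation and the two inverses are valid --- to be the only genuine obstacle; everything else is routine.
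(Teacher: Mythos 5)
Your route, once unpacked, is very close to the paper's own proof: your block matrix $\tilde G(\Lambda)=\mathrm{diag}(G_{11},G(\lambda))$ is exactly the pair $(H(\lambda),G(\lambda))$ used there (the paper's $H(\lambda)=\id+(\mathrm{id}\otimes\tau)[x^*(\lambda-xx^*-b\otimes\id)^{-1}x]$ is your $(1,1)$ block), and the two block components of your Dyson equation are precisely \eqref{eq:HHH} and \eqref{eq:GGG}; your sign-flip automorphism argument for the vanishing of the off-diagonal blocks plays the role of the vanishing of odd moments, and your finite-dimensional ``$AB=\id$ forces $B=A^{-1}$'' step is a legitimate (arguably cleaner) substitute for the paper's invertibility arguments via sign-definite imaginary parts and positivity off the convex hull.

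The genuine gap is in the step you yourself single out as the main obstacle. Your base case establishes the Dyson equation only on the region $\{\Lambda:\|\Lambda^{-1}\|\ \text{small}\}$, but the family you then continue along is $\Lambda(\lambda)=\mathrm{diag}(\id_m,\lambda\id_d-b)$, whose $(1,1)$ block is frozen at $\id_m$, so $\|\Lambda(\lambda)^{-1}\|\ge 1$ for every $\lambda$; hence no point of the family lies in the region where you have proved the equation, and an analytic continuation performed only in the scalar variable $\lambda$ never gets started (the subsequent component/boundary bootstrap is moot, and in any case superfluous: once the identity holds for an open set of $\lambda$ one simply continues the inverse-free identity $(\Lambda(\lambda)-\eta(\tilde G))\tilde G=\id$ over the connected set $\mathbb{C}\setminus\mathrm{conv}(\spc(xx^*+b\otimes\id))$ and lets finite dimensionality deliver all invertibility claims). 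To close the gap you must either continue in the matrix variable as well --- e.g.\ work with $\Lambda_{w,\lambda}=\mathrm{diag}(w\id_m,\lambda\id_d-b)$, prove the equation for $|w|,|\lambda|$ large, then slide $w$ along $[1,\infty)$ down to $w=1$ at fixed large real $\lambda$, checking via a Schur complement that $\lambda-b\otimes\id-w^{-1}xx^*$ stays invertible so that $\tilde G$ is analytic along the path --- or redo the expansion directly at $\Lambda(\lambda)$ for $|\lambda|$ large, exploiting that only the $(2,2)$ block carries the large parameter (note $\|\Lambda(\lambda)^{-1}\tilde x\|$ itself need not be small; one must use the block structure, e.g.\ that $(\Lambda(\lambda)^{-1}\tilde x)^2=O(|\lambda|^{-1})$, to justify the expansion, term-by-term application of $\mathrm{id}\otimes\tau$, and resummation). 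The second repair amounts to expanding the blocks $R$, $x^*Rx$ of the linearized resolvent in powers of $\lambda^{-1}$ and deriving the coupled block equations from the moment recursions, which is exactly the paper's computation with $C_k,D_k$.
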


In the case that $d=1$ (that is, when the coefficients $a_i,b$ are 
scalar), the equation in Theorem \ref{thm:cauchy} reduces to the well 
known equation for the Cauchy 
transform of the free Poisson distribution \cite[pp.\ 203--206]{NS06}.
This explains the assertion made in the introduction that the operators
considered in this note may be viewed as matrix-valued analogues of the 
free Poisson distribution.

\begin{proof}[Proof of Theorem \ref{thm:cauchy}]
When $|\lambda|$ is sufficiently large that
$g(\lambda)=(\lambda-b\otimes\id)^{-1}$ satisfies
$\|g(\lambda)\|\|xx^*\|<1$, $G(\lambda)$ has a convergent power 
series expansion
$$
	G(\lambda) =
	\sum_{k=0}^\infty
	({\mathrm{id}\otimes\tau})\Big[
	g(\lambda)\big(xx^*g(\lambda)\big)^k\Big].
$$
Let us fix such a $\lambda$ until further notice, and define
\begin{align*}
	C_k &=
	({\mathrm{id}\otimes\tau})\Big[
	\big(xx^*g(\lambda)\big)^k\Big],
\\
	D_k &=
	({\mathrm{id}\otimes\tau})\Big[ 
	x^*g(\lambda) 
	\big(xx^*g(\lambda)\big)^k x\Big].
\end{align*}
Reasoning as in the proof of \cite[Lemma 4.4]{vH25cdm} and
using that the trace of a product of an odd number of 
semicircular variables vanishes, we obtain the recursions
\begin{align*}
	C_{k+1} &=
	\sum_{i=1}^n a_ia_i^*(\lambda-b)^{-1} C_k
	+
	\sum_{l=0}^{k-1} \sum_{i=1}^n a_i D_l
	a_i^* (\lambda-b)^{-1} C_{k-1-l}, \\
	D_k &=
	\sum_{i=1}^n a_i^*(\lambda-b)^{-1} C_k a_i +
	\sum_{l=0}^{k-1}
	\sum_{i=1}^n a_i^*(\lambda-b)^{-1}C_l a_i
	D_{k-1-l}
\end{align*}
for $k\ge 0$ with initial condition $C_0=\id$.
Summing these recursions over $k$ yields
\begin{align}
\label{eq:GGG}
	(\lambda-b)
	G(\lambda) &= \id +
	\sum_{i=1}^n a_i 
	H(\lambda)
        a_i^* 
	G(\lambda), \\
\label{eq:HHH}
	H(\lambda) &=
	\id+
        \sum_{i=1}^n a_i^* G(\lambda) a_i  
	H(\lambda),
\end{align}
where we define
$$
	H(\lambda) := \id+\sum_{k=0}^\infty D_k =
	\id +
	({\mathrm{id}\otimes\tau})\Big[
	x^*\big(\lambda - xx^* - b\otimes\id\big)^{-1}x
	\Big].
$$
We have therefore established the validity of 
\eqref{eq:GGG}--\eqref{eq:HHH} for all $\lambda\in\mathbb{C}$ with
$|\lambda|$ sufficiently large. However, since both $G$ and $H$ are 
analytic, the validity of these equations extends to every
$\lambda\in \mathbb{C}\backslash\spc(xx^*+b\otimes\id)$.

Now note that as $\mathrm{Im}\,\frac{1}{\lambda-t} = 
-\frac{\mathrm{Im}\,\lambda}{|\lambda-t|^2}$ for $t\in\mathbb{R}$, it 
follows from the definitions of $G(\lambda)$ and $H(\lambda)$ that 
$\mathrm{Im}\,G(\lambda)$ and 
$\mathrm{Im}\,H(\lambda)$ are 
negative definite when $\mathrm{Im}\,\lambda>0$ and positive definite when 
$\mathrm{Im}\,\lambda<0$. Thus $G(\lambda)$ and $H(\lambda)$ are 
invertible whenever $\mathrm{Im}\,\lambda\ne 0$ by \cite[Lemma~3.1]{HT05}. 
Consequently, \eqref{eq:HHH} yields
\begin{equation}
\label{eq:Hdef}
	H(\lambda) = \Bigg(
	\id - \sum_{j=1}^n a_j^* G(\lambda) a_j
	\Bigg)^{-1},
\end{equation}
and substituting this expression into \eqref{eq:GGG} and multiplying on 
the right by $G(\lambda)^{-1}$ yields the conclusion
in the case that $\mathrm{Im}\,\lambda\ne 0$.

It remains to consider the cases where $\lambda$ is real.
If $\lambda>\lambda_{\rm max}(xx^*+b\otimes\id)$, then
$G(\lambda)$ and $H(\lambda)$ are
positive definite and hence invertible by their definition, and
the proof is concluded as above.
If $\lambda<\lambda_{\rm min}(xx^*+b\otimes\id)$, then $G(\lambda)$ is 
negative definite and thus $\id-\sum_j a_j^*G(\lambda)a_j$
is positive definite. Together with \eqref{eq:HHH}, this implies
that $G(\lambda)$ and $H(\lambda)$ are invertible, and we again
conclude as above.
\end{proof}

We can now conclude the proof of Theorem \ref{thm:main}.

\begin{proof}[Proof of Theorem \ref{thm:main}]
The two variational principles were proved above in
Propositions~\ref{prop:mainupper}~and~\ref{prop:mainlower}, respectively.
It is convenient to write
$$
	h(z) = 
        b + z^{-1} +
        \sum_{i=1}^n a_i \Bigg(   
        \id - \sum_{j=1}^n a_j^*z a_j
        \Bigg)^{-1} a_i^*,
$$
so that the two variational principles may by expressed as
\begin{align*}
	\lambda_{\rm max}(xx^* + b\otimes\id) &=
        \inf_{\substack{z>0 \\ \sum_j \! a_j^*za_j<\id}}
	\lambda_{\rm max}(h(z)), \\
	\lambda_{\rm min}(xx^* + b\otimes\id) &=
        \sup_{z<0}\,
	\lambda_{\rm min}(h(z)).
\end{align*}
It remains to prove the last assertion of the theorem.

Let us first consider the second variational principle. Clearly
$$
	\lambda_{\rm min}(xx^* + b\otimes\id) =
	\sup_{z<0}\,
        \lambda_{\rm min}(h(z))
	\ge 
	\sup_{\lambda\in\mathbb{R}}
	\sup_{\substack{z<0 \\ h(z)=\lambda\id }}
	\,\lambda_{\rm min}(h(z)),
$$
since we restrict the supremum to a smaller set.
On the other hand, fix any $\varepsilon>0$ and let
$\mu = \lambda_{\rm min}(xx^* +b\otimes\id) - \varepsilon$.
Then $G(\mu)<0$ and $h(G(\mu))=\mu\id$ 
by the 
definition of the matrix Cauchy transform and Theorem \ref{thm:cauchy}, 
respectively. Therefore
$$
	\sup_{\lambda\in\mathbb{R}}
	\sup_{\substack{z<0 \\ h(z)=\lambda\id }}
	\,\lambda_{\rm min}(h(z))
	\ge
		\lambda_{\rm min}(h(G(\mu))) = 
	\lambda_{\rm min}(xx^* +b\otimes\id) - \varepsilon.
$$
Letting $\varepsilon\downarrow 0$ shows that 
$$
	\lambda_{\rm min}(xx^* +b\otimes\id) =
	\sup_{\lambda\in\mathbb{R}}
	\sup_{\substack{z<0 \\ h(z)=\lambda\id }}
        \,\lambda_{\rm min}(h(z)),
$$
which is the desired conclusion.

For the first variational principle, let $\varepsilon>0$ and $\mu = 
\lambda_{\rm max}(xx^* +b\otimes\id) + \varepsilon$. Then $G(\mu)>0$ and 
$h(G(\mu))=\mu\id$ by the definition of the matrix Cauchy transform and 
Theorem \ref{thm:cauchy}. Moreover, that $\sum_j a_j^* G(\mu) a_j <\id$ 
follows from \eqref{eq:Hdef} since $H(\mu)>0$ by its definition. The proof 
for the first variational principle can now be completed in the identical 
manner as for the second variational principle.
\end{proof}

We conclude by proving Lemma \ref{lem:reduction}. A similar result in the 
setting of Theorem~\ref{thm:lehner} appears in \cite[Lemma~7.1]{BCSV24}, 
but the proof given here is entirely different.

\begin{proof}[Proof of Lemma \ref{lem:reduction}]
Under the assumptions of Lemma \ref{lem:reduction},
it is readily verified from the recursions in the proof
of Theorem \ref{thm:cauchy} that $C_k\in\mathcal{B}$ and
$D_k\in\mathcal{A}$ for all $k\ge 0$. Thus $G(\lambda)\in \mathcal{B}$
and $H(\lambda)\in\mathcal{A}$ for all 
$\lambda\in\mathbb{C}\backslash\spc(xx^*+b\otimes\id)$. The rest of the 
proof now proceeds exactly as in the proof of the last part of Theorem 
\ref{thm:main}.
\end{proof}


\subsection*{Acknowledgments}

RvH was supported in part by NSF grant DMS-2347954.

\bibliographystyle{abbrv}
\bibliography{ref}

\end{document}